\theoremstyle{plain}
\newtheorem{theorem}{Theorem}
\newtheorem*{theorem*}{Theorem}
\newtheorem{corollary}[theorem]{Corollary}
\newtheorem*{corollary*}{Corollary}
\newtheorem{lemma}[theorem]{Lemma}
\newtheorem*{lemma*}{Lemma}
\newtheorem{proposition}[theorem]{Proposition}
\newtheorem*{proposition*}{Proposition}
\newtheorem*{conjecture*}{Conjecture}
\theoremstyle{definition}
\newtheorem*{definition*}{Definition}
\newtheorem*{example*}{Example}
\newtheorem{problem}{Problem}
\newtheorem*{problem*}{Problem}
\theoremstyle{remark}
\newtheorem*{remark*}{Remark}
\title{On the inverse image of pattern classes under bubble sort}
\author[Albert]{Michael H. Albert}
\author[Atkinson]{M. D. Atkinson}
\author[Bouvel]{Mathilde Bouvel}
\author[Claesson]{Anders Claesson}
\author[Dukes]{Mark Dukes}
\address{M. H. Albert and M. D. Atkinson:
Department of Computer Science,
University of Otago,
Dunedin, New Zealand
}
\address{M. Bouvel: 
CNRS, LaBRI, Universit\'e Bordeaux 1,
351 cours de la Lib\'eration, 33405 Talence, France}
\address{A. Claesson and M. Dukes:
Department of Computer and Information Sciences,
University of Strathclyde,
Glasgow,
United Kingdom
}
\thanks{
MB was suppoerted by the ``Fondation de Sciences Math\'ematiques de Paris''.
MB and AC convey special acknowledgements to MHA and MDA for their kind hospitality during their visits to the University of Otago.
AC \& MD were supported by grant no.\ 090038012 from the Icelandic Research Fund.
}
\keywords{permutation; bubble sort; pattern class}
\newcommand{\subseq}{\subseteq}
\newcommand{\Av}{\mathrm{Av}}
\newcommand{\s}[1]{{#1^{\!\scriptscriptstyle{+}}\hspace{-0.45pt}}}
\newcommand{\unB}{{B^{-1}}}
\newcommand{\unS}{{S^{-1}}}
\def\nminusone{n^{-}}
\def\nminustwo{n^{--}}
\def\inv{\preceq}
\def\ninv{\not\preceq}
\def\P{\mathcal{P}}
\begin{document}

\begin{abstract}
Let $B$ be the operation of re-ordering a sequence by one pass of bubble sort. 
We completely answer the question of when the inverse image of a principal pattern class under $B$ is a pattern class.
\end{abstract}

\maketitle
\thispagestyle{empty}

\section{Introduction}

Bubble sort is an elementary (and inefficient) sorting algorithm that proceeds in a number of passes.  In each pass the sequence to be sorted is scanned from left to right.  In any pass every item found to be greater than the item immediately to its right is exchanged with this item.  As the algorithm proceeds items move to the right (bubble up the sequence) until blocked by some larger item.  In general, many passes are required before the sorting is complete.  In this paper we are interested in the permutational effect of a single pass of bubble sort.  We shall see that this effect has some unexpected connections with the theory of permutation patterns.  But before stating our results we need to define our terms more precisely.

We shall only consider sequences of distinct terms.  Such a sequence is order isomorphic to a unique permutation and the bubble sort algorithm would process this permutation in the same way as it processes the original sequence.  We state many of our results in the language of permutations; this gains clarity without losing generality.

All permutations in the paper are on the set of terms $\{1,\ldots,n\}$ for some $n\geq 1$.  Roman letters denote single terms of a sequence and greek letters denote (possibly empty) sequences.  Notation such as $a>\beta$ is shorthand for $a>b$ for all terms $b$ of $\beta$.

The operator $B$ that describes the effect of a single pass of bubble sort is easily seen to have the following recursive definition.  For the empty sequence $\epsilon$ we have $B(\epsilon)=\epsilon$ and for non-empty sequences $\sigma$ written as $\sigma=\sigma_1 m\sigma_2$ where $m$ is the maximal term, we have
\[B(\sigma)=B(\sigma_1)\sigma_2 m.\]

An alternative definition of $B$ is furnished by the easily proved

\begin{lemma}\label{Bdefinition2}
If $\sigma=n_1\lambda_1 n_2\lambda_2 \cdots n_k\lambda_k$ where $n_1,\ldots,n_k$ are the left to right maxima of $\sigma$ then
\[B(\sigma)=\lambda_1 n_1\lambda_2 n_2\cdots \lambda_k n_k.\]
\end{lemma}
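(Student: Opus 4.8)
The plan is to induct on $k$, the number of left-to-right maxima of $\sigma$, using the recursive definition $B(\sigma)=B(\sigma_1)\sigma_2 m$ (where $\sigma=\sigma_1 m\sigma_2$ and $m$ is the maximal term). For the base case $k=1$ we have $\sigma=n_1\lambda_1$ with $n_1$ maximal, so the decomposition is $\sigma_1=\epsilon$, $m=n_1$, $\sigma_2=\lambda_1$, and the recursion gives $B(\sigma)=B(\epsilon)\lambda_1 n_1=\lambda_1 n_1$, which is exactly the claimed form.

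For the inductive step, first note that the maximal term of $\sigma$ is $n_k$: the $n_i$ are increasing, so $n_k$ is the largest left-to-right maximum, and every term that is not a left-to-right maximum is, by definition, smaller than some left-to-right maximum occurring before it, hence smaller than $n_k$. Therefore in the decomposition $\sigma=\sigma_1 m\sigma_2$ we have $m=n_k$, $\sigma_1=n_1\lambda_1 n_2\lambda_2\cdots n_{k-1}\lambda_{k-1}$, and $\sigma_2=\lambda_k$. The recursive definition then yields $B(\sigma)=B(\sigma_1)\,\lambda_k n_k$.

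It remains to evaluate $B(\sigma_1)$. The point is that the left-to-right maxima of $\sigma_1$ are precisely $n_1,\ldots,n_{k-1}$: whether a term is a left-to-right maximum depends only on the terms weakly preceding it, so each $n_i$ with $i\le k-1$ remains a left-to-right maximum of the prefix $\sigma_1$, and since no term of any $\lambda_j$ is a left-to-right maximum of $\sigma$, no such term is a left-to-right maximum of $\sigma_1$ either. Thus $\sigma_1$ satisfies the hypothesis of the lemma with $k-1$ left-to-right maxima, and the induction hypothesis gives $B(\sigma_1)=\lambda_1 n_1\cdots\lambda_{k-1}n_{k-1}$. Substituting back, $B(\sigma)=\lambda_1 n_1\cdots\lambda_{k-1}n_{k-1}\lambda_k n_k$, completing the induction.

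This argument is essentially routine. The only step requiring a moment's care — and the one I would state explicitly — is the observation that chopping $\sigma$ off immediately before its last (equivalently, largest) left-to-right maximum removes exactly that one left-to-right maximum and introduces no new ones, which is what allows the induction hypothesis to be applied to $\sigma_1$. Everything else is a direct unwinding of the recursion for $B$.
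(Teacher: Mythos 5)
Your proof is correct: the induction on the number of left-to-right maxima, together with the observations that the maximal term of $\sigma$ is its last left-to-right maximum and that truncating $\sigma$ just before that term preserves the earlier left-to-right maxima and creates no new ones, is exactly the routine argument the authors have in mind. The paper itself omits the proof entirely (the lemma is introduced as ``easily proved''), so there is nothing to contrast your argument with; it fills the gap correctly.
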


The other concept we need is that of a pattern class of permutations.  
We shall write $\alpha\subseq\beta$ to denote that $\alpha$ is a subsequence of $\beta$ (although not necessarily a consecutive subsequence).  
We say that a permutation $\sigma$ is a subpermutation of a permutation $\tau$ if $\tau$ has a subsequence that is order isomorphic to $\sigma$, 
and denote this by $\sigma\inv\tau$.  
For example, $312\inv 24153$ because $413\subseq 24153$.  
The subpermutation relation is a partial order and it is studied through its downsets which are called pattern classes.  
Each pattern class $D$ can be characterised by the minimal set $M$ of permutations that it avoids:
\[D=\Av(M)=\{\beta: \mu\ninv\beta\mbox{ for all }\mu\in M\}.\]
The set $M$ is called the basis of $D$ and, if $|M|=1$, $D$ is called a \emph{principal} pattern class.

Our first connection between the operator $B$ and pattern classes is very easy.

\begin{proposition} 
$B(\sigma)$ is an increasing permutation if and only if $\sigma\in\Av(231,321)$.
\end{proposition}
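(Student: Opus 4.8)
The plan is to prove both implications at once by induction on the length $n$ of $\sigma$, using the recursive description $B(\sigma)=B(\sigma_1)\sigma_2 m$ with $\sigma=\sigma_1 m\sigma_2$ and $m=n$ the maximal term. The base case $n\le 1$ is trivial. For the inductive step I write $\sigma=\sigma_1\,n\,\sigma_2$ and reduce the statement about $\sigma$ to a statement about the shorter sequence $\sigma_1$, via two elementary observations — one purely about patterns, one about $B$.

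The first observation is that $\sigma\in\Av(231,321)$ if and only if (i) $\sigma_1\in\Av(231,321)$, (ii) $\sigma_2$ is increasing, and (iii) every term of $\sigma_1$ is smaller than every term of $\sigma_2$. The point is that both forbidden patterns $231$ and $321$ end in their smallest entry, so $\sigma$ contains one of them precisely when some term has two larger terms to its left. Testing the three positions of such a configuration against the decomposition $\sigma_1\,n\,\sigma_2$ gives the equivalence: a descent inside $\sigma_2$ together with the leading $n$ produces a $321$, an inversion between a term of $\sigma_1$ and a term of $\sigma_2$ together with $n$ produces a $231$, and conversely, if (i)–(iii) hold then any triple witnessing a forbidden pattern would have to lie entirely inside $\sigma_1$ (the term $n$ can never be the ``$1$'' of the pattern, and nothing placed after $n$ can have two larger predecessors once $\sigma_2$ is increasing and dominates $\sigma_1$).

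The second observation is that, since $B(\sigma)=B(\sigma_1)\,\sigma_2\,n$ is a concatenation ending in $n$, it is increasing (equivalently, equal to $12\cdots n$) if and only if the prefix $B(\sigma_1)$ equals $12\cdots|\sigma_1|$ and $\sigma_2$ equals $(|\sigma_1|+1)\cdots(n-1)$. Passing to the permutation order-isomorphic to $\sigma_1$, on which $B$ acts in the same way, the former condition says exactly that $B(\sigma_1)$ is increasing and that $\sigma_1$ carries the values $\{1,\dots,|\sigma_1|\}$; combined with the condition on $\sigma_2$, this is precisely ``(ii) and (iii)'' together with ``$B(\sigma_1)$ increasing''. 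Applying the induction hypothesis to $\sigma_1$ converts ``$B(\sigma_1)$ increasing'' into ``$\sigma_1\in\Av(231,321)$'', i.e. condition (i). Chaining the two observations, $B(\sigma)$ is increasing $\iff$ (i)–(iii) hold $\iff\sigma\in\Av(231,321)$, which closes the induction.

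The only delicate point is the bookkeeping about value sets: $\sigma_1$ need not be a permutation of $\{1,\dots,|\sigma_1|\}$, so one must state the induction hypothesis for the permutation order-isomorphic to $\sigma_1$ and keep careful track of exactly which values $\sigma_1$ and $\sigma_2$ carry — this is where conditions (ii) and (iii) do their work. A non-inductive alternative is to argue straight from Lemma~\ref{Bdefinition2}: writing $\sigma=n_1\lambda_1\cdots n_k\lambda_k$, one checks that $\Av(231,321)$ consists exactly of those $\sigma$ in which each $\lambda_i$ is the increasing run $(n_{i-1}+1)\cdots(n_i-1)$ (with $n_0=0$), and for such $\sigma$ the formula of Lemma~\ref{Bdefinition2} gives $B(\sigma)=\lambda_1 n_1\lambda_2 n_2\cdots=12\cdots n$; conversely $B(\sigma)=12\cdots n$ forces each block $\lambda_i n_i$ of $B(\sigma)$ to be a run of consecutive values ending in $n_i$, which is the same structural condition.
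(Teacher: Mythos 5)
Your main argument is correct and is essentially the paper's own proof: the same decomposition $\sigma=\sigma_1\,n\,\sigma_2$, the same induction on length, and the same three conditions ($\sigma_1\in\Av(231,321)$, $\sigma_2$ increasing, $\sigma_1<\sigma_2$) linking the two sides --- you simply spell out the pattern-avoidance equivalence and the value-set bookkeeping that the paper leaves implicit. The non-inductive alternative via Lemma~\ref{Bdefinition2} that you sketch at the end is a valid different route, but your primary proof matches the paper's.
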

\begin{proof}
Let $\sigma=\sigma_1 m \sigma_2$ be a permutation with largest term $m$.  
Then $\sigma$ is sorted by $B$ if and only if $\sigma_1$ is sorted by $B$, 
$\sigma_2$ is increasing, and $\sigma_1<\sigma_2$.  
But, by induction on $|\sigma|$, this occurs if and only if $\sigma_1$ avoids $231$ and $321$, 
$\sigma_2$ is increasing, and $\sigma_1<\sigma_2$, 
which is if and only if $\sigma$ itself avoids $231$ and $321$.
\end{proof}

This result, which characterises the permutations sortable by a single pass of bubble sort, 
can be expressed in another way using the fact that the increasing permutations are precisely those that avoid the permutation $21$:
\[\unB(\Av(21))=\Av(231,321).\]

At this point it is convenient to contrast the sorting operator $B$ with a similar operator $S$ introduced by Julian West \cite{SortingTwice}.  
The definition of $S$ on permutations of length $n$ is
\[S(\alpha n\beta)=S(\alpha) S(\beta) n,\]
with $S(\epsilon)=\epsilon$.
West introduced this operator in the context of sorting via one pass through a stack.  
Here the permutations that $S$ can sort are precisely those of $\Av(231)$.  
The effect of composing the operator $S$ with itself has been studied extensively \cite{ACombinatorialProof, Multi-statisticEnumeration,AProofOfJulian}.

It is natural to ask questions about the compositions of such operators.  
One such question is: which permutations can be sorted by applying $B$ then $S$?  
In other words what is the set
\[(SB)^{-1}(\Av(21))=\unB \unS(\Av(21))=\unB(\Av(231))? \]

In this note we shall answer a much more general question.  
We shall determine the permutations $\pi$ for which $\unB(\Av(\pi))$ is a pattern class and, when it is, give its basis.

\section{Results}

Our results are stated in terms of the number of left to right maxima of a permutation $\pi$.  
We begin with a result that shows that it is rare for $\unB(\Av(\pi))$ to be a pattern class.  
In this result (and subsequently) we write $\s{n}$ for $n+1$ for typographical convenience.

\begin{theorem}\label{notaclass}
\label{lemma:lmax>2}
  If $\pi$ is a permutation with at least three left to right maxima, the third of which is not the final
  symbol of $\pi$, then  $\unB(\Av(\pi))$ is not a pattern class.
\end{theorem}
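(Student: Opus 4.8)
The aim is to show that $\unB(\Av(\pi))$ fails to be a downset of the subpermutation order $\inv$, and for this it is enough to exhibit a single pair $\sigma\inv\tau$ with $\pi\inv B(\sigma)$ but $\pi\ninv B(\tau)$: then $\tau\in\unB(\Av(\pi))$ while $\sigma\notin\unB(\Av(\pi))$, so the set is not closed downwards.

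First I would record a reduction that constrains such a pair. If $\sigma$ is obtained from $\tau$ by deleting a symbol that is \emph{not} a left to right maximum of $\tau$, then that deletion creates no new left to right maxima, so by Lemma~\ref{Bdefinition2} the permutation $B(\sigma)$ is just $B(\tau)$ with one symbol removed; hence $B(\sigma)\inv B(\tau)$ and such a pair is useless. So the deleted symbol must be a left to right maximum of $\tau$, equivalently $\tau$ is $\sigma$ with one left to right maximum inserted. Applying Lemma~\ref{Bdefinition2} to $\sigma$ and to $\tau$ then gives precise control: $B(\tau)$ is obtained from $B(\sigma)$ by a controlled rearrangement --- certain of the later maxima are pulled leftwards past the smaller blocks that had preceded them, turning ascents into descents --- together with the insertion of one new symbol. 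This rearrangement is the only route by which $B(\sigma)$ can realise a pattern that $B(\tau)$ misses.

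For the construction I would begin from a short permutation $\rho$ that only just contains $\pi$ and that ends with its largest symbol, so that $\rho=B(\sigma)$ for some $\sigma$: take $\rho=\pi$ when $\pi$ itself ends with its maximum, and otherwise take $\rho$ to be $\pi$ with one new largest symbol appended. I would pick such a $\sigma$ whose left to right maxima decomposition makes the next step bite, and then form $\tau$ by inserting one left to right maximum into $\sigma$, positioned so that, by Lemma~\ref{Bdefinition2}, the rearrangement of $\rho$ that produces $B(\tau)$ destroys the copy of $\pi$ inside $\rho$ --- typically by turning an initial ascending fragment of $\rho$ into a descent (and inserting a single symbol). The containment $\pi\inv B(\sigma)=\rho$ is then immediate.

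The heart of the argument, and the step I expect to be the main obstacle, is verifying that $B(\tau)$ avoids $\pi$ entirely. One must rule out every embedding of $\pi$ into $B(\tau)$: after the rearrangement the first three left to right maxima $a_1<a_2<a_3$ of $\pi$ can no longer all be realised in increasing order, so an embedding would be forced to press the inserted symbol, or the appended maximum, into one of those roles; but since $a_3$ is not the final symbol of $\pi$, there is a symbol of $\pi$ strictly after $a_3$, and one checks that this symbol then has no admissible image. Carrying this through rigorously --- and organising the case analysis to cover all admissible $\pi$ (whether or not $\pi$ ends with its maximum, and whether it has exactly three or more left to right maxima) --- is the real work; it also pinpoints the role of the hypothesis, which is exactly to supply both the increasing triple $a_1,a_2,a_3$ and the nonempty tail of $\pi$ beyond $a_3$, neither of which is available when $\pi$ has at most two left to right maxima or exactly three with the third final. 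As a check on the recipe: for $\pi=1243$ one may take $\sigma=21453$ and $\tau=421563$, so that $B(\sigma)=12435$ contains $1243$ while $B(\tau)=214536$ avoids it; for $\pi=1234$ one may take $\sigma=2143$ and $\tau=24153$, so that $B(\sigma)=1234$ while $B(\tau)=21435$ avoids $1234$.
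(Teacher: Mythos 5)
Your overall strategy is the right one and is exactly the paper's: exhibit a pair $\sigma\inv\tau$ with $\pi\inv B(\sigma)$ but $\pi\ninv B(\tau)$. Your preliminary observations are also sound (the reduction to deleting a left to right maximum is correct, as is the reduction to the case where $\pi$ ends with its maximum, which the paper also makes via $\unB(\Av(\pi))=\unB(\Av(\pi\s{n}))$), and your two worked examples check out. But the proposal has a genuine gap at precisely the point you yourself flag as ``the real work'': you never specify $\tau$ in general, and you never actually verify that $B(\tau)$ avoids $\pi$; ``one checks that this symbol then has no admissible image'' is a promissory note, not an argument. For an arbitrary $\pi=a\alpha b\beta c\gamma n$ the space of possible embeddings of $\pi$ into $B(\tau)$ is large, and ruling them all out requires either a carefully rigged $\tau$ or a substantial case analysis that you have not supplied.

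The paper closes this gap with a specific, economical choice that you should compare against your recipe. Writing $\pi=a\alpha b\beta c\gamma n$ with $a,b,c$ the first three left to right maxima and $\gamma n$ nonempty, it sets $\theta_1=ba\alpha n\beta c\gamma$, so that $B(\theta_1)=\pi$ exactly (not merely $\pi\inv B(\theta_1)$), and then $\theta_2=\s{n}\theta_1$, so that $B(\theta_2)=\theta_1\s{n}$ has length exactly $|\pi|+1$. That length bound is what makes the avoidance check short: any embedding of $\pi$ into $B(\theta_2)$ omits a single symbol, so either the prefix $a\alpha b$ of $\pi$ maps onto the prefix $ba\alpha$ of $B(\theta_2)$ (impossible, since one ends and the other begins with its largest term) or the suffix $b\beta c\gamma n$ maps onto $n\beta c\gamma\s{n}$ (impossible, since $b<c$ but $n>c$). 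Your plan of starting from some $\sigma$ with $B(\sigma)\supseq$-containing $\pi$ and then inserting a left to right maximum ``positioned so that the rearrangement destroys the copy of $\pi$'' could likely be made to work, but as written it does not identify the insertion, and destroying one copy of $\pi$ is not the same as showing no copy survives. Until that verification is carried out in general, the proof is incomplete.
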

\begin{proof}
We begin by noting that, if $\pi$ has length $n$ and does not end with its maximal element, 
then $\unB(\Av(\pi))=\unB(\Av(\pi \s{n}))$.  The reason for this is that, if $\sigma=\alpha m\beta$ is a permutation with
  $m=\max(\sigma)$, then we have
  \[
  B(\sigma)= B(\alpha)\beta m \in\Av(\pi \s{n})
  \iff B(\alpha)\beta\in\Av(\pi)
  \iff B(\sigma)\in\Av(\pi),
  \]
   where the last equivalence is a consequence of $\pi$ not ending
  with $n$.
  
Now let  $\pi$ be a permutation of length $n$ with at least three left to right maxima the third of which is not its final term. 
By the first remark of the proof we may (by appending a new maximal element to $\pi$ if necessary) assume that $\pi$ ends with its largest term. 
Thus we may take $\pi$ to have the form
  \[\pi=a\alpha b\beta c\gamma n\]
where $a,b,c$ are the first three left to right maxima of $\pi$  and $\gamma n$ is non-empty.  
Consider the pair of permutations  $\theta_1=ba\alpha n\beta c\gamma$ and $\theta_2=\s{n}\theta_1$.  
Then, as $B(\theta_1)=\pi$, $\theta_1\not\in \unB(\Av(\pi))$.  
On the other hand, if there were an embedding of $\pi$ into $B(\theta_2)=ba\alpha n\beta c\gamma\s{n}$, 
$a\alpha b$ could not map onto $ba\alpha$ and so, as $B(\theta_2)$ is only one term longer than $\pi$, 
$b\beta c\gamma n$ would map onto $n\beta c\gamma\s{n}$ which is impossible as $b<c$ but $n>c$.  
Thus $B(\theta_2)$ does not contain $\pi$ and so $\theta_2\in\unB(\Av(\pi))$.  
Since $\theta_1\inv\theta_2$ we have proved that $\unB(\Av(\pi))$ is not a pattern class.
\end{proof}

In the remainder of this section we shall prove a series of results 
that yield a strong converse of Theorem \ref{notaclass}.  
We shall not only show that $\unB(\Av(\pi))$ is  a pattern class for all 
permutations $\pi$ not covered by Theorem \ref{notaclass} but will display an explicit basis.  
There are several cases to consider but our basic methodology is to 
identify sets of permutations $R$ to which the following lemma can be applied.

\begin{lemma} 
\label{basicmethod}
Let $\pi$ be any permutation. 
If there exists a set $R$ of permutations such that for any permutation $\sigma$:
\begin{enumerate}
\item  $\pi\inv B(\sigma) \implies\rho\inv\sigma$ for some  $\rho\in R$,
\item $\rho\inv\sigma$ for some $\rho\in R$ $\implies \pi\inv B(\sigma)$,
\end{enumerate}
then $\unB(\Av(\pi))$ is a pattern class.  
Furthermore if $R$ is a minimal set with these properties then $R$ is the basis of $\unB(\Av(\pi))$.
\end{lemma}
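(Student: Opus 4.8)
The plan is to observe first that hypotheses (1) and (2), taken together, say exactly that for every permutation $\sigma$ one has the biconditional
\[ \bigl(\pi\inv B(\sigma)\bigr)\iff\bigl(\rho\inv\sigma\text{ for some }\rho\in R\bigr); \]
indeed (2) supplies the implication ``$\Leftarrow$'' and (1) supplies ``$\Rightarrow$''. Negating both sides, $\pi\ninv B(\sigma)$ holds if and only if $\rho\ninv\sigma$ for every $\rho\in R$, that is, $B(\sigma)\in\Av(\pi)$ if and only if $\sigma\in\Av(R)$. Since $\sigma\in\unB(\Av(\pi))$ means precisely $B(\sigma)\in\Av(\pi)$, this yields $\unB(\Av(\pi))=\Av(R)$, and $\Av(R)$ is a pattern class (the avoidance set of any set of permutations is a downset), which is the first assertion.

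For the ``furthermore'' part I would pass to minimal elements. Let $R_0$ be the set of $\inv$-minimal members of $R$. Because $\inv$ has no infinite strictly decreasing chain (permutation lengths are positive integers), every element of $R$ lies above some element of $R_0$, so $\Av(R)=\Av(R_0)$ and $R_0$ is an antichain; an antichain whose avoidance set equals $D$ is by definition the basis of $D$, so $R_0$ is the basis of $\unB(\Av(\pi))$. It then remains to show $R=R_0$ when $R$ is chosen minimal under inclusion among sets satisfying (1) and (2). The crux is to check that $R_0$ also satisfies (1) and (2): property (2) is inherited immediately since $R_0\subseteq R$; for property (1), if $\pi\inv B(\sigma)$ then (1) applied to $R$ gives $\rho\in R$ with $\rho\inv\sigma$, and choosing $\rho_0\in R_0$ with $\rho_0\inv\rho$ yields $\rho_0\inv\sigma$. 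Thus $R_0\subseteq R$ still has the two required properties, so the minimality of $R$ forces $R=R_0$, which is the basis.

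I do not expect a genuine obstacle here; the argument is essentially bookkeeping. The two spots deserving attention are that the key equivalence in the first paragraph really does use \emph{both} hypotheses (one direction of the biconditional from each), and that the reduction to $\inv$-minimal elements is legitimate — but this last point is just the familiar fact that a downset is determined by the antichain of $\inv$-minimal permutations it omits, which is precisely what makes ``the basis'' well defined.
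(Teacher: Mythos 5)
Your proof is correct and follows essentially the same route as the paper: combine (1) and (2) into the biconditional $\pi\inv B(\sigma)\iff\rho\inv\sigma$ for some $\rho\in R$, negate, and conclude $\unB(\Av(\pi))=\Av(R)$ is a downset. Your treatment of the ``furthermore'' clause (passing to the antichain $R_0$ of $\inv$-minimal elements, checking it still satisfies (1) and (2), and invoking minimality of $R$) is more detailed than the paper's one-line assertion, but it is the same idea made explicit.
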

\begin{proof}
The two conditions say that $\rho\inv\sigma$ for some $\rho\in R$ if and only if $\pi\inv B(\sigma)$.  
However
\begin{eqnarray*}
\pi\inv B(\sigma)&\iff&B(\sigma)\not\in\Av(\pi)\\
&\iff&\sigma\not\in \unB(\Av(\pi)).
\end{eqnarray*}
In other words $\rho\ninv\sigma$ for all $\rho\in R$ if and only if $\sigma\in \unB(\Av(\pi))$.  
This shows that $\unB(\Av(\pi))$ is a downset in the subpermutation order, i.e. a pattern class.  
It also shows that, if $R$ is minimal, it is the basis of $\unB(\Av(\pi))$.
\end{proof}

We first dispose of two trivial cases:

\begin{proposition}\label{trivialcases}
If $\pi$ is the permutation of length 1 then $\unB(\Av(\pi))$ is empty.  
If $\pi=12$ then $\unB(\Av(\pi))$ consists of the permutation 1 alone.
\end{proposition}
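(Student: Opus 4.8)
The plan is to make $\Av(\pi)$ completely explicit in each of the two cases and then pull it back through $B$, using the fact --- immediate from the recursive definition $B(\sigma_1 m \sigma_2)=B(\sigma_1)\sigma_2 m$, or from Lemma~\ref{Bdefinition2} --- that $B$ preserves length and that for every nonempty permutation $\sigma$ the word $B(\sigma)$ ends with $\max(\sigma)$.

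For the first statement I would argue that every permutation considered here has length at least $1$ and so contains the one-element pattern; hence $\Av(\pi)=\emptyset$ when $|\pi|=1$, and therefore $\unB(\Av(\pi))=\emptyset$. (Were one to admit the empty permutation $\epsilon$, then $\Av(1)=\{\epsilon\}$ and, since $B$ preserves length, $\unB(\Av(1))=\{\epsilon\}$; the statement as phrased rests on the convention that permutations are nonempty, which I would flag at the outset.)

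For the second statement I would first recall that $\Av(12)$ is precisely the set of strictly decreasing permutations $n\,(n-1)\cdots 21$, $n\ge 1$. Then I would use the observation above: if $\sigma$ has length $n$ then $B(\sigma)$ has length $n$ and ends in $n=\max(\sigma)$, so a decreasing word whose last letter is its maximum must have length $1$; thus $B(\sigma)\in\Av(12)$ forces $|\sigma|=1$, i.e.\ $\sigma=1$. Conversely $B(1)=1\in\Av(12)$. Hence $\unB(\Av(12))=\{1\}$, as claimed.

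Both verifications are routine and I do not expect a genuine obstacle; the only point needing a word of care is the convention on the empty permutation, which is exactly what makes the first inverse image literally empty rather than the singleton $\{\epsilon\}$.
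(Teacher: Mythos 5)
Your proof is correct and follows essentially the same route as the paper: $\Av(1)=\emptyset$ because all permutations here are nonempty, and for $\pi=12$ one notes that $B(\sigma)$ is decreasing yet ends in its maximum, forcing length $1$. The only additions are the explicit converse check $B(1)=1$ and the remark on the empty-permutation convention, both of which the paper leaves implicit (the latter being settled by its standing assumption $n\geq 1$).
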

\begin{proof} The first statement is trivial because $\Av(\pi)$ is empty.  
In the second case, if a permutation $\sigma$ lies in $\unB(\Av(12))$ then $B(\sigma)$ is decreasing.  
But $B(\sigma)$ ends with its maximal term and hence $|B(\sigma)|=1$.
\end{proof}

We next consider the general case that $\pi$ has a single left to right maximum. 
To do this we prove the following two lemmas which verify the two conditions of Lemma \ref{basicmethod}.

\begin{lemma}\label{OneLR1}
Let $\sigma$, $a\lambda$ be sequences both of length greater 
than 1 such that $a\lambda$ begins with its largest term and 
such that $a\lambda\subseq B(\sigma)$.  Then there exists $b>a$ 
such that $\sigma$ contains one of $ab\lambda$ and $ba\lambda$.
\end{lemma}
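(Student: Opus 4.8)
The plan is to use the alternative description of $B$ from Lemma \ref{Bdefinition2}. Write $\sigma = n_1\mu_1 n_2\mu_2\cdots n_k\mu_k$ where $n_1,\dots,n_k$ are the left to right maxima of $\sigma$, so that $B(\sigma) = \mu_1 n_1\mu_2 n_2\cdots\mu_k n_k$. Now suppose $a\lambda \subseq B(\sigma)$, say via the embedding that picks out the subsequence $a\lambda$ from the displayed form of $B(\sigma)$. Since $a\lambda$ begins with its largest term, the term playing the role of $a$ in $B(\sigma)$ dominates every later selected term. I would locate which block of $B(\sigma)$ the symbol $a$ comes from: either $a$ is one of the left to right maxima $n_i$, or $a$ lies inside some $\mu_i$.

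First I would handle the case where the copy of $a$ in $B(\sigma)$ is a left to right maximum $n_i$ of $\sigma$. In $B(\sigma)$ the term $n_i$ is immediately followed by the block $\mu_{i+1}$ and then $n_{i+1}$; in $\sigma$ itself, $n_i$ is followed by $\mu_i$ and then $n_{i+1}$, and crucially $n_{i+1} > n_i = a$. The selected copy of $\lambda$ lies among the terms of $B(\sigma)$ strictly to the right of this $n_i$, i.e.\ among $\mu_{i+1} n_{i+1}\mu_{i+2}n_{i+2}\cdots$. I claim all of those terms also appear, in the same relative order, to the right of $n_i$ in $\sigma$ (they do: $\sigma$ restricted to positions after $n_i$ is $\mu_i n_{i+1}\mu_{i+1}\cdots n_k\mu_k$, which contains $n_{i+1}\mu_{i+1}n_{i+2}\mu_{i+2}\cdots$ as a subsequence, hence contains the copy of $\lambda$). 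Then in $\sigma$ we can read off $a$ (the symbol $n_i$), then $n_{i+1}$, then $\lambda$ — and since $n_{i+1}>a$, putting $b=n_{i+1}$ shows $\sigma$ contains $ab\lambda$. Here $n_{i+1}$ exists because there is at least one selected term of $\lambda$ to the right of $a$ and that term must lie in some block $n_j$ or $\mu_j$ with $j>i$, forcing $k>i$; a short separate argument covers the edge possibilities.

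Next I would handle the case where the copy of $a$ lies inside a block $\mu_i$. Then in $B(\sigma)$ the term $a$ is followed by the rest of $\mu_i$, then $n_i$ (with $n_i>a$ since $n_i$ is a left to right maximum and $a\in\mu_i$ sits between $n_i$ and $n_{i+1}$), then $\mu_{i+1}n_{i+1}\cdots$. In $\sigma$, however, $a$ appears \emph{after} $n_i$: the relevant suffix of $\sigma$ is $\cdots n_i \mu_i n_{i+1}\cdots$, so reading from $n_i$ we get $n_i$, then $a$ (inside $\mu_i$), then the remainder of $\mu_i$, then $n_{i+1}\mu_{i+1}\cdots$. The selected copy of $\lambda$ in $B(\sigma)$ uses terms from (the tail of $\mu_i$) followed by ($n_i\mu_{i+1}n_{i+1}\cdots$); I would check that all of these occur in the same relative order in the suffix of $\sigma$ after $a$ together with $n_i$ available just before $a$ — the only term of $\lambda$'s host positions that sits to the left of $a$ in $\sigma$ is $n_i$ itself, and $n_i>a$. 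Thus $\sigma$ contains $b a \lambda$ with $b=n_i>a$. The main obstacle — and the step to be careful about — is precisely this bookkeeping: verifying that the relative order of the terms selected for $\lambda$ is preserved when one passes from the $B(\sigma)$-order to the $\sigma$-order, and that in each case exactly one extra larger term ($b$) can be inserted on the correct side of $a$. Once the block containing $a$ is pinned down, the reordering $\mu_j\leftrightarrow n_j$ effected by $B$ only ever moves each $n_j$ to the right past its own $\mu_j$, so no two terms of a later block get their order reversed, and the verification reduces to the two local pictures above.
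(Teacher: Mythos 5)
Your plan is correct, but it proves the lemma by a genuinely different route from the paper. The paper argues by induction on $|\sigma|$ using the recursive definition $B(\sigma)=B(\sigma_1)\sigma_2 m$, splitting into cases according to how the occurrence of $a\lambda$ straddles $B(\sigma_1)$, $\sigma_2$ and $m$; the witness $b$ is then produced either by the inductive hypothesis or as the maximal term $m$. You instead work directly and non-inductively from the block form of Lemma~\ref{Bdefinition2}, casing on whether the selected copy of $a$ is a left-to-right maximum $n_i$ of $\sigma$ or sits inside some block $\mu_i$, and taking $b=n_{i+1}$ (giving $ab\lambda$) or $b=n_i$ (giving $ba\lambda$) respectively. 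The linchpin of your argument --- which you gesture at but should state explicitly --- is that every left-to-right maximum of $\sigma$ lying to the right of $a$'s position in $B(\sigma)$ exceeds $a$, hence (since $a>\lambda$) none of them is a term of $\lambda$; combined with your observation that the only order-reversals effected by $B$ are of each $n_j$ against its own block $\mu_j$, this shows the selected terms of $\lambda$ keep their relative order, and their position relative to $a$, when read back in $\sigma$, so only the single larger witness $b$ needs inserting on the appropriate side of $a$. What your approach buys is explicitness (it names $b$ and explains structurally when one gets $ab\lambda$ versus $ba\lambda$) and consistency with the style of Lemmas~\ref{OneLR2} and~\ref{TwoLR2}, which also argue from the block description; what the paper's induction buys is a template that scales directly to the harder two-maxima analogue, Lemma~\ref{TwoLR1}, which is proved by exactly the same inductive case split.
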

\begin{proof}
We shall prove the result by induction on $|\sigma|$.  
If $|\sigma|=2$ the result is vacuously true since $a\lambda\subseq B(\sigma)$ 
is possible only if $\lambda$ is empty (as $B(\sigma)$ ends with its maximal term) 
and this is impossible as $|a\lambda|>1$.

So now assume that $|\sigma|>2$ and that the result holds for sequences shorter than $\sigma$.
Writing $\sigma=\sigma_1 m\sigma_2$, where $m$ is the largest term of $\sigma$, 
we have $a\lambda\subseq B(\sigma_1)\sigma_2 m$.  
In fact, as $a\lambda$ does not end with its largest term, we have $a\lambda\subseq B(\sigma_1)\sigma_2$.  
We consider the various ways in which $a\lambda$ can lie across $B(\sigma_1)\sigma_2$.

Suppose first that $\lambda=\lambda_1\lambda_2$ with $\lambda_1$ non-empty 
and that $a\lambda_1\subseq B(\sigma_1)$ and $\lambda_2\subseq\sigma_2$.  
Since $\sigma_1$ is shorter than $\sigma$ the inductive hypothesis applies 
and it proves that, for some $b>a$, $\sigma_1$ contains one of $ab\lambda_1$ and $ba\lambda_1$.  
But then $\sigma=\sigma_1 m\sigma_2$ contains one of $ab\lambda_1\lambda_2$ and $ba\lambda_1\lambda_2$.

Suppose next that $a\subseq B(\sigma_1)$ and that $\lambda\subseq\sigma_2$.  
Then $am\lambda\subseq\sigma_1 m\sigma_2=\sigma$.

Finally suppose that $a\lambda\subseq\sigma_2$.  Then $ma\lambda\subseq m\sigma_2\subseq\sigma$.
\end{proof}

\begin{lemma}\label{OneLR2} Let $\lambda$ be any sequence and $a,b$ values with $b>a>\lambda$.  
If $\sigma$ is a sequence that contains either of $ab\lambda$ or $ba\lambda$ then $a\lambda$ is contained in $B(\sigma)$.
\end{lemma}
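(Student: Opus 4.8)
The plan is to argue directly from the structural description of $B$ in Lemma~\ref{Bdefinition2}. Write $\sigma=n_1\mu_1\cdots n_k\mu_k$ with $n_1,\dots,n_k$ the left to right maxima of $\sigma$, so that $B(\sigma)=\mu_1 n_1\cdots\mu_k n_k$. The key features I would extract are: (i) deleting the symbols $n_1,\dots,n_k$ from either $\sigma$ or $B(\sigma)$ leaves the same word $\mu_1\cdots\mu_k$, i.e. $B$ preserves the relative order of the non-left-to-right maxima; and (ii) the block $\mu_i$ lying between $n_i$ and $n_{i+1}$ consists entirely of terms smaller than $n_i$ (the running maximum there). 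The case $\lambda$ empty is trivial since $B$ fixes the underlying set of $\sigma$, so assume $\lambda$ non-empty.

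The first real step is to note that no term of $\lambda$ is a left to right maximum of $\sigma$: whether $\sigma$ contains $ab\lambda$ or $ba\lambda$, the value $b$ occurs in $\sigma$ before every term of $\lambda$, and $b>a>\lambda$, so a strictly larger value sits to the left of each term of $\lambda$. Hence all of $\lambda$ lies among the non-left-to-right maxima, which by (i) keep their relative order under $B$; so $B(\sigma)$ contains $\lambda$ in its original order. Since $a>\lambda$, it now suffices to produce an occurrence of $a$ in $B(\sigma)$ lying to the left of this copy of $\lambda$.

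I would then split on whether $a$ is a left to right maximum of $\sigma$. If it is not, then $a$ is itself one of the non-left-to-right maxima and, in both $ab\lambda$ and $ba\lambda$, precedes every term of $\lambda$ in $\sigma$; by (i) it still precedes $\lambda$ in $B(\sigma)$, and we are done. If $a$ is a left to right maximum, this forces the hypothesis $ab\lambda\subseq\sigma$ (in $ba\lambda$ the larger value $b$ precedes $a$, so $a$ cannot be a left to right maximum). Say $a=n_i$. By (ii) the block $\mu_i$ has all terms smaller than $n_i=a$, so $b$ — being larger than $a$ and occurring after it — must occur at or after $n_{i+1}$; consequently every term of $\lambda$ occurs strictly after $n_{i+1}$ in $\sigma$, hence lies in one of the blocks $\mu_{i+1},\dots,\mu_k$. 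In $B(\sigma)=\mu_1 n_1\cdots\mu_i n_i\mu_{i+1}n_{i+1}\cdots$ the symbol $n_i=a$ stands before all of $\mu_{i+1},\dots,\mu_k$, hence before $\lambda$, which completes the proof.

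I expect the only genuine subtlety to be this last case: under $B$ a left to right maximum drifts to the right (exactly past the block that followed it), so a priori the copy of $a$ could overtake part of $\lambda$; the whole point of the auxiliary value $b$ in the hypothesis is to pin all of $\lambda$ beyond the block $\mu_i$, which is precisely as far right as $a=n_i$ can move. An alternative, more computational route would mimic the proof of Lemma~\ref{OneLR1}: induct on $|\sigma|$ via $B(\sigma_1 m\sigma_2)=B(\sigma_1)\sigma_2 m$, tracking where the embedding of $ab\lambda$ or $ba\lambda$ meets the maximum $m$; this also works but requires checking several positions of the split.
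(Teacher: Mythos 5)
Your proof is correct and follows essentially the same route as the paper's: both rely on Lemma~\ref{Bdefinition2} to note that the non-left-to-right-maxima (in particular all of $\lambda$, and $a$ when it is not a maximum) keep their relative order under $B$, and both handle the remaining case $ab\lambda\subseq\sigma$ with $a$ a left to right maximum by observing that the next left to right maximum after $a$ still precedes $\lambda$, so $a$ lands before $\lambda$ in $B(\sigma)$. Your block-by-block bookkeeping with $\mu_i$ and $n_{i+1}$ is just a more explicit rendering of the paper's argument.
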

\begin{proof}
If $ba\lambda\subseq\sigma$ then no symbol of $a\lambda$ can be a left to right maximum of $\sigma$.  
However Lemma \ref{Bdefinition2} implies that $B$ preserves the order 
of symbols that are not left to right maxima; hence $a\lambda\subseq B(\sigma)$.  
This argument would apply to the case $ab\lambda\subseq\sigma$ if $a$ was 
not a left to right maximum (obviously the symbols of $\lambda$ cannot be left to right maxima).  
However if $a$ was a left to right maximum then $b$ or some symbol between $a$ 
and $b$ must also be a left to right maximum. 
By Lemma \ref{Bdefinition2} again this other left to right maximum precedes 
$\lambda$ in $\sigma$ and therefore $a$ precedes $\lambda$ in $B(\sigma)$.
\end{proof}

Lemmas \ref{basicmethod}, \ref{OneLR1}, \ref{OneLR2} now handle the case of permutations with just one left to right maximum:

\begin{proposition}\label{OneLRmain}
Suppose that $\pi=n\alpha$ has  length greater than $1$ and begins with its maximal element.  
Then $\unB(\Av(\pi))$ is a pattern class with basis
$$ \{n\s{n} \alpha, \s{n}n \alpha \} \textrm{.}$$
\end{proposition}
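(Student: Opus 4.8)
The plan is to apply Lemma~\ref{basicmethod} with the set $R=\{n\s{n}\alpha,\ \s{n}n\alpha\}$, checking condition~(1) by means of Lemma~\ref{OneLR1} and condition~(2) by means of Lemma~\ref{OneLR2}. The one point that needs care throughout is the passage between the two languages in play: Lemmas~\ref{OneLR1} and~\ref{OneLR2} are stated in terms of honest subsequences of values (the relation $\subseq$ and the word ``contains''), whereas the conditions of Lemma~\ref{basicmethod} concern the order-isomorphism relation $\inv$. The bridge is the hypothesis that $\pi=n\alpha$ begins with its maximal element: in any occurrence of $\pi$ in a word, the first chosen value exceeds all the others, so that occurrence, read as a word of actual values, literally begins with its largest term and has the form $a\lambda$ with $a>\lambda$ and $\lambda$ order-isomorphic to $\alpha$. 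Dually, $n\s{n}\alpha$ and $\s{n}n\alpha$ are exactly the patterns of the words $ab\lambda$ and $ba\lambda$ whenever $b>a>\lambda$ and $\lambda\cong\alpha$.

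For condition~(1), suppose $\pi\inv B(\sigma)$. If $|\sigma|\le 1$ then $|B(\sigma)|\le 1<|\pi|$, so this is impossible and the implication holds vacuously; hence assume $|\sigma|>1$. Choose an occurrence of $\pi$ in $B(\sigma)$ and write it, as a word of values, as $a\lambda$; by the remark above it begins with its largest term and $|a\lambda|=|\pi|>1$, so Lemma~\ref{OneLR1} supplies a value $b>a$ such that $\sigma$ contains $ab\lambda$ or $ba\lambda$. Since $b>a>\lambda$ and $\lambda\cong\alpha$, the word $ab\lambda$ is an occurrence of $n\s{n}\alpha$ and the word $ba\lambda$ is an occurrence of $\s{n}n\alpha$; in either case $\rho\inv\sigma$ for some $\rho\in R$, which is condition~(1).

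For condition~(2), suppose $\rho\inv\sigma$ for some $\rho\in R$. An occurrence of $n\s{n}\alpha$ in $\sigma$ is a word $ab\lambda$ with $a<b$, both terms exceeding every term of $\lambda$, and $\lambda\cong\alpha$; an occurrence of $\s{n}n\alpha$ is a word $ba\lambda$ with the same inequalities. In both cases $b>a>\lambda$, so Lemma~\ref{OneLR2} yields $a\lambda\subseq B(\sigma)$. Since $a>\lambda$ and $\lambda\cong\alpha$, the word $a\lambda$ is an occurrence of $n\alpha=\pi$, so $\pi\inv B(\sigma)$; this is condition~(2). Lemma~\ref{basicmethod} now shows that $\unB(\Av(\pi))$ is a pattern class.

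Finally, for the basis: the permutations $n\s{n}\alpha$ and $\s{n}n\alpha$ are distinct (they differ in their first two entries) and have the same length $n+1$, so neither is a subpermutation of the other; thus $R$ is an antichain, hence a minimal set satisfying conditions~(1)--(2), and by the last sentence of Lemma~\ref{basicmethod} it is the basis of $\unB(\Av(\pi))$. The only genuinely delicate step is the bookkeeping in the middle two paragraphs --- keeping straight which of the values $n$ and $\s{n}$ plays the role of $a$ and which plays the role of $b$ in each of the two cases, and verifying that $ab\lambda$ and $ba\lambda$ are exactly the two claimed basis permutations rather than some symmetry of them --- but this is routine once the dictionary between $\subseq$ and $\inv$ sketched above is in place.
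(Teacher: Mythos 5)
Your proof is correct and is exactly the argument the paper intends: the paper gives no written proof of this proposition, merely asserting that Lemmas~\ref{basicmethod}, \ref{OneLR1} and \ref{OneLR2} handle it, and your write-up supplies precisely the missing details (the dictionary between $\subseq$ and $\inv$, the vacuous small cases, and the antichain argument for minimality of the basis).
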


\begin{corollary}\label{aalphab}
If $\pi=m\alpha n$ has only two left to right maxima, namely $m$ and $n$,  then $\unB(\Av(\pi))$ is a pattern class with basis
$$ \{n\s{n} \alpha, \s{n}n \alpha \} \textrm{.}$$
\end{corollary}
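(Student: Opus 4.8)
The plan is to derive the corollary from Proposition~\ref{OneLRmain} via the identity
\[\unB(\Av(m\alpha n)) = \unB(\Av(m\alpha)).\]
Granting this, note first that the hypothesis forces every entry of $\alpha$ to lie below $m$ (otherwise the first entry of $\alpha$ exceeding $m$ would be a third left-to-right maximum of $\pi$), so $m\alpha$ begins with its maximal element $m$. Unless $\alpha$ is empty — the case $\pi=12$, already settled by Proposition~\ref{trivialcases}, where one merely checks that the stated basis reduces to $\{12,21\}$ — the sequence $m\alpha$ has length exceeding $1$, so Proposition~\ref{OneLRmain} applies to it and shows $\unB(\Av(m\alpha))$ is a pattern class with basis $\{m\s m\alpha,\s m m\alpha\}$. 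Since the two sets $\unB(\Av(m\alpha n))$ and $\unB(\Av(m\alpha))$ are literally equal, $\unB(\Av(m\alpha n))$ is this same pattern class with this same basis; and $\{m\s m\alpha,\s m m\alpha\}$ is the basis asserted in the corollary, because in each of $m\s m\alpha$ and $n\s n\alpha$ (respectively $\s m m\alpha$ and $\s n n\alpha$) the first two symbols form a pair of consecutive values, larger than every symbol of $\alpha$, occurring in increasing (respectively decreasing) order, so the two presentations denote the same pair of patterns.

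For the identity I would argue pointwise: for an arbitrary permutation $\sigma$, show $m\alpha n\inv B(\sigma)$ if and only if $m\alpha\inv B(\sigma)$. The forward implication is immediate. For the converse, fix an embedding of $m\alpha$ into $B(\sigma)$. By the recursive definition of $B$ (or Lemma~\ref{Bdefinition2}), $B(\sigma)$ ends with the global maximum of $\sigma$, which is in particular the largest value occurring in $B(\sigma)$. The crux is that this final slot of $B(\sigma)$ cannot be used by the embedding: were it used, it would have to be the image of the largest value of the pattern $m\alpha$, namely $m$; but $m$ stands at the front of $m\alpha$, so the non-empty tail $\alpha$ would then have to be embedded to the right of the last position of $B(\sigma)$ — impossible. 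Hence the embedding lies entirely in the prefix of $B(\sigma)$ before its final symbol, and we may extend it by sending the trailing $n$ of $\pi$ to that final symbol, which sits to the right of, and has larger value than, everything already used. This produces an embedding of $m\alpha n$ into $B(\sigma)$, establishing the identity.

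I expect the genuine content to be confined to that final-slot observation, which rests squarely on $m\alpha$ carrying its maximum at its front; the argument degenerates precisely when $\alpha$ is empty, consistent with $\pi=12$ being a boundary case that must be handled separately. The remaining ingredients — deducing $\alpha<m$ from the left-to-right maxima hypothesis, matching the two spellings of the basis up to order-isomorphism, and isolating $\pi=12$ — are routine bookkeeping.
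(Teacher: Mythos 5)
Your proposal is correct and follows essentially the same route as the paper: both establish $\unB(\Av(m\alpha n))=\unB(\Av(m\alpha))$ via the observation that $B(\sigma)$ ends with its maximum while $m\alpha$ (for $\alpha$ non-empty) does not, and then invoke Proposition~\ref{OneLRmain}. Your explicit handling of the degenerate case $\pi=12$ and of the re-indexing of the basis is a small tidying-up of details the paper leaves implicit, not a different argument.
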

\begin{proof}
Note that $m\alpha$ does not end with its maximum element whereas any permutation $B(\sigma)$ does end with its maximum element.  
This means that 
\[m\alpha\inv B(\sigma)\implies m\alpha n\inv B(\sigma).\]
Hence
\begin{eqnarray*}
\sigma\in \unB(\Av(m\alpha n))&\iff&B(\sigma)\in \Av(m\alpha n)\\
&\iff&B(\sigma)\in\Av(m\alpha)\\
&\iff&\sigma\in \unB(\Av(m\alpha)).
\end{eqnarray*}
Thus $\unB(\Av(m\alpha n))=\unB(\Av(m\alpha))$ and the result follows from the previous proposition.
\end{proof}

Before treating the case that $\pi$ has two or three left to right maxima we 
introduce a variant of the standard diagrammatic way of displaying permutations 
which will be a helpful aid in understanding the bases of the pattern classes 
$\unB(\Av(\pi))$. 
Every permutation $\sigma=s_1\cdots s_n$ can be represented by its graph of points 
$(i,s_i)$ drawn in the plane. 
To specify a permutation, only the vertical and horizontal orders of points matter 
rather than their precise values and so these graphs are useful tools for arguing 
about subpermutations.  For example the graph of Figure \ref{diagram} shows the 
permutation $3152746$ together with a subpermutation. 

\begin{figure}
\begin{center}
\includegraphics[width=1.5in]{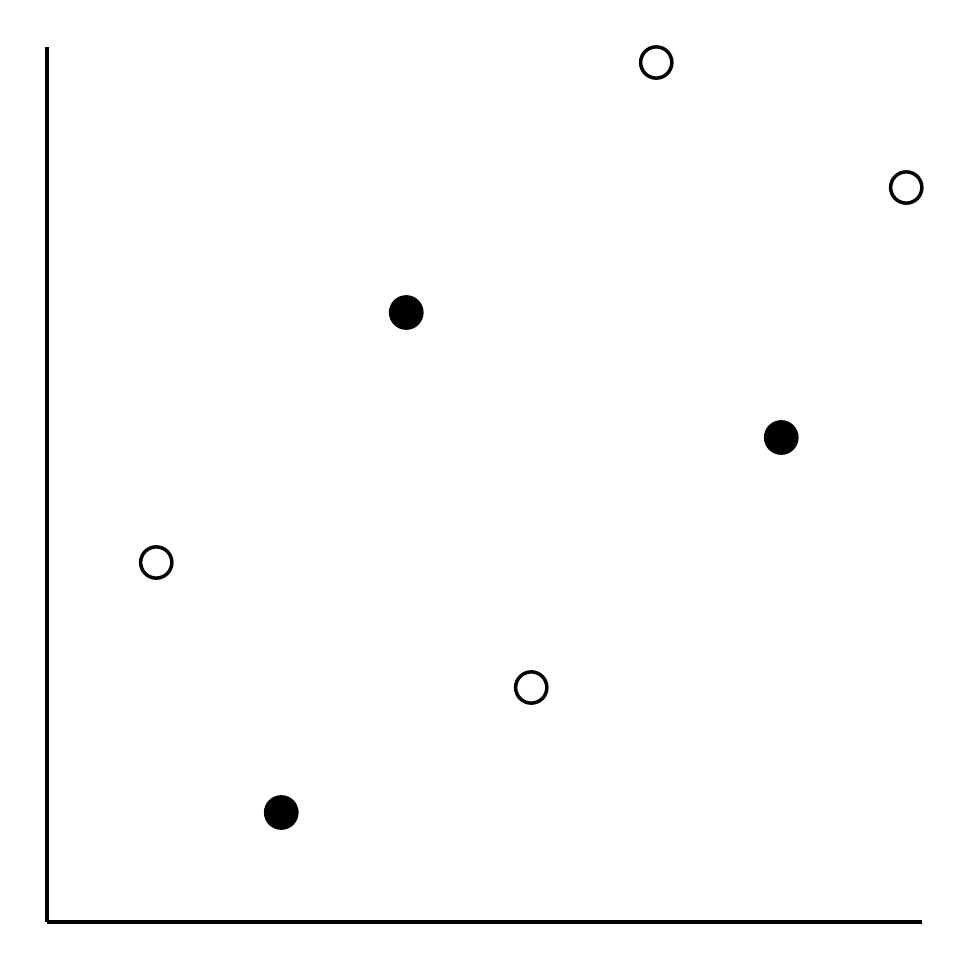}
\caption{The permutation $3152746$ and, in white dots, the subpermutation $2143$.}
\label{diagram}
\end{center}
\end{figure}

In these permutation diagrams no two points can be vertically aligned or horizontally aligned 
since then their  positional order or their value order would be ambiguous.  
However, if we want diagrams to display a \emph{set} of permutations we can exploit 
this very ambiguity. 
For example Figure \ref{permset} represents a set of 4 permutations because two 
points lie on the same horizontal line and two lie on the same vertical line.

\begin{figure}
\begin{center}
\includegraphics[width=1in]{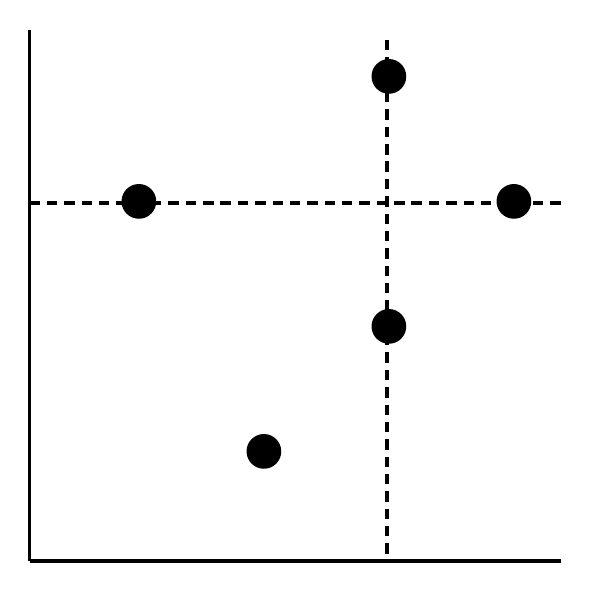}
\caption{The permutations 31254, 31524, 41253, 41523.}
\label{permset}
\end{center}
\end{figure}

Our major use of this notation is in Figure \ref{basispermextension}.  
On the left this shows a permutation $\pi$ with two left to right maxima (the upper left quadrant being empty).  
On the right it shows a set of permutations $R(\pi)$.  
The two white points lie anywhere in the range indicated. 
These permutations are extensions of $\pi$.  
In all but four cases they are 2-point extensions.  
However it is permitted that the white points can coalesce (in a point in the top left corner) 
and then the diagram represents four 1-point extensions of $\pi$.  
This set of permutations (or, more precisely, the minimal permutations of the set) 
will turn out to be the basis of $\unB(\Av(\pi))$ (except when $\beta$ is empty).

\begin{figure}
\begin{center}
\includegraphics[width=4in]{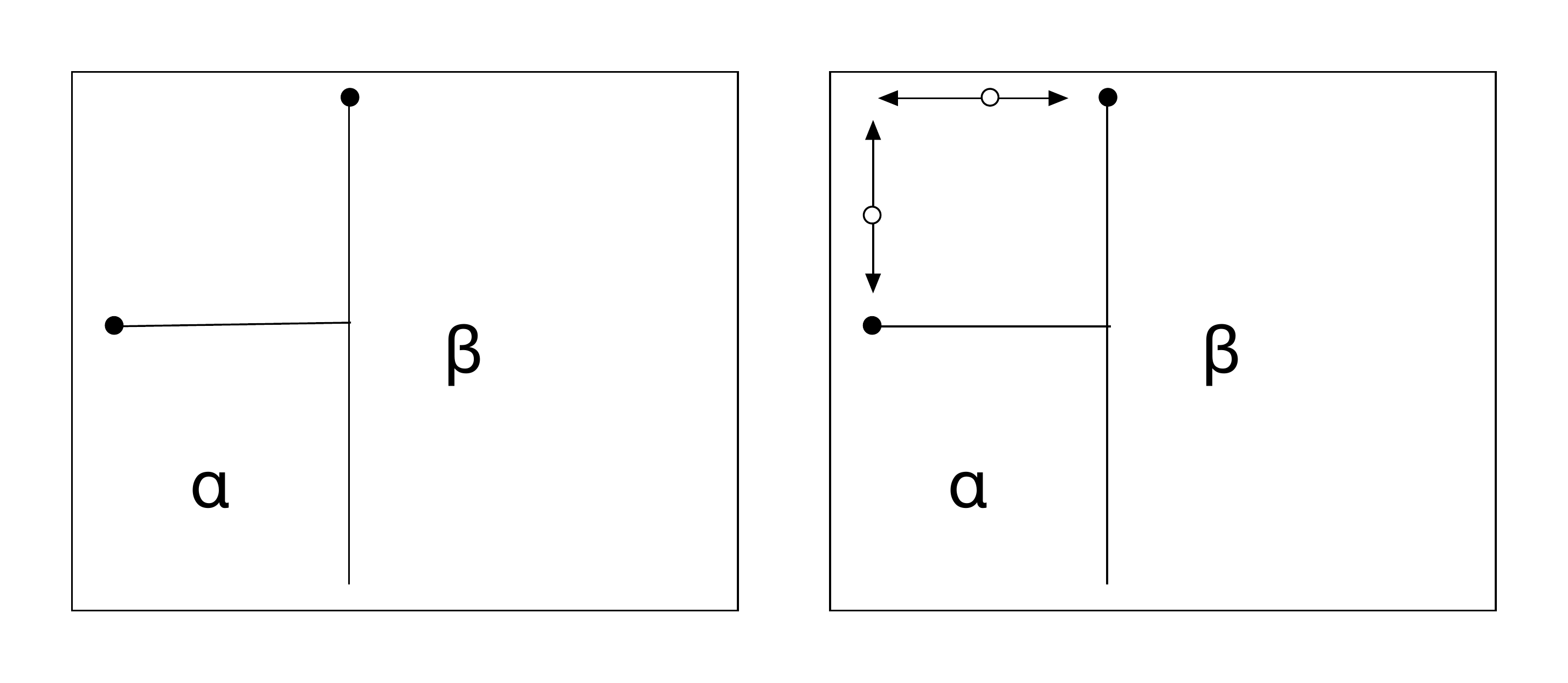}
\caption{A permutation $\pi$ and the set $R(\pi)$.}
\label{basispermextension}
\end{center}
\end{figure}

\begin{lemma}\label{TwoLR1}
Let $\sigma$, $a\lambda b\mu$ be sequences such that $a$ and $b$ are the 
only two left to right maxima of $a\lambda b\mu$ with $\mu$ non-empty 
and such that $a\lambda b\mu\subseq B(\sigma)$. 
Let $\pi$ be the permutation order isomorphic to $a\lambda b\mu$.  
Then $\sigma$ contains a sequence order isomorphic to one of the members of $R(\pi)$.
\end{lemma}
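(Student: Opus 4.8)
The plan is to mimic the structure of the proof of Lemma \ref{OneLR1}, arguing by induction on $|\sigma|$, but now tracking two left-to-right maxima $a$ and $b$ of the pattern instead of one. Write $\sigma = \sigma_1 m \sigma_2$ where $m = \max(\sigma)$, so that $B(\sigma) = B(\sigma_1)\sigma_2 m$. Since $a\lambda b\mu$ ends in its maximal element only if $\mu$ is empty (contradiction), we first observe $a\lambda b\mu \subseteq B(\sigma_1)\sigma_2$. The embedding of $a\lambda b\mu$ therefore splits at some point: there is a prefix of $a\lambda b\mu$ landing in $B(\sigma_1)$ and the complementary suffix landing in $\sigma_2$. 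I would organize the argument by where this split falls — inside $a\lambda$, at the boundary between $\lambda$ and $b$, or inside $b\mu$ — and in each case produce an embedding of some member of $R(\pi)$ into $\sigma$.

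The key steps, in order: First, handle the ``easy'' split cases where almost everything lands in $\sigma_2$ (analogous to the last two cases of Lemma \ref{OneLR1}): if $a\lambda b\mu \subseteq \sigma_2$ then $m a \lambda b \mu \subseteq m\sigma_2 \subseteq \sigma$, and $m$ sitting above $a$ before the rest gives one of the points of $R(\pi)$; similarly if only $a$ (or $a\lambda$, or $a\lambda b$) lands in $B(\sigma_1)$ and the rest in $\sigma_2$, insert $m$ just after the $B(\sigma_1)$-part to play the role of the new left-to-right maximum in $R(\pi)$. Second, handle the genuinely inductive case, where a prefix $a\lambda'$ with $\lambda' \subseteq \lambda$ nonempty lands in $B(\sigma_1)$: since this prefix $a\lambda'$ begins with its largest term and has length $>1$, Lemma \ref{OneLR1} applies to $\sigma_1$, yielding $b' > a$ with $\sigma_1$ containing $ab'\lambda'$ or $b'a\lambda'$; then $\sigma$ contains $ab'\lambda' m (\text{rest})$ or $b'a\lambda' m (\text{rest})$, and I must check this is order-isomorphic to a member of $R(\pi)$ — here $b'$ and $m$ are the two white points of Figure \ref{basispermextension}. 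Third, the subcase where the prefix landing in $B(\sigma_1)$ is exactly $a\lambda b\mu'$ with $\mu' \subseteq \mu$ nonempty: now $B(\sigma_1)$ contains a sequence with two left-to-right maxima $a,b$ and nonempty tail $\mu'$, so by induction $\sigma_1$ contains a member of $R(\pi')$ where $\pi'$ is the pattern of $a\lambda b\mu'$; one then checks $R(\pi') $ extended by $m$ at the end sits inside $R(\pi)$.

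The main obstacle I expect is bookkeeping: verifying in each case that the sequence produced inside $\sigma$ is genuinely order-isomorphic to one of the explicit permutations depicted in $R(\pi)$ — in particular that the two extra points ($b'$ and $m$, or the inductively-supplied point and $m$) fall in the allowed shaded region of Figure \ref{basispermextension}, and that the possibility of the two white points coalescing is correctly matched when $\lambda'$ or $\mu'$ collapses. A secondary subtlety is the interaction of $m$ with the left-to-right maxima structure: when we embed a prefix into $B(\sigma_1)$ and then use $m$, we need $m$ to be larger than everything relevant, which it is by definition, but we must confirm $m$ lands in the correct position (to the right of the $\sigma_1$-part, to the left of the $\sigma_2$-part) so that it serves as the intended maximum of the $R(\pi)$-pattern. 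Once the case split is set up cleanly, each verification is routine, but the number of cases — roughly three split locations, each with the degenerate ``coalescing'' variant — is where the real work lies.
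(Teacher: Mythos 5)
Your overall strategy coincides with the paper's: induction on $|\sigma|$ via $B(\sigma)=B(\sigma_1)\sigma_2 m$, a case split on where the embedding of $a\lambda b\mu$ breaks between $B(\sigma_1)$ and $\sigma_2$, Lemma~\ref{OneLR1} to pull a prefix $a\lambda'$ back from $B(\sigma_1)$ into $\sigma_1$, and the inductive hypothesis when a nonempty piece of $\mu$ lands in $B(\sigma_1)$. Those cases match the paper's Cases 1, 3, 4 and 5 and would go through.

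There is, however, one genuine gap: you file the split ``$a\lambda b$ lands in $B(\sigma_1)$, $\mu$ lands in $\sigma_2$'' among the easy cases where one ``just inserts $m$ after the $B(\sigma_1)$-part''. That cannot work as stated, because $a\lambda b\subseq B(\sigma_1)$ does not give $a\lambda b\subseq\sigma_1$: $B$ reorders terms, and controlling the possible reorderings is the entire content of the lemma. Moreover neither of your two tools applies to $a\lambda b$ directly: Lemma~\ref{OneLR1} requires the embedded pattern to \emph{begin} with its largest term, whereas $a\lambda b$ ends with it; and the inductive hypothesis requires a nonempty tail after the second left-to-right maximum, whereas here that tail is empty. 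The paper's Case 2 resolves this by applying Lemma~\ref{OneLR1} only to $a\lambda$, obtaining $at\lambda$ or $ta\lambda$ in $\sigma_1$ for some $t>a$, and then arguing separately about where the term $b$ can sit in $\sigma_1$: it may have migrated to the left of $\lambda$ (in which case $b$ plays the role of both white points), or it may lie within or after $\lambda$ (in which case $t$ plays $x$, $b$ plays $y$, and $m$ plays $z$). This sub-analysis is exactly what forces the two white points of $R(\pi)$ to be allowed to be distinct and to sit flexibly among the entries of $\lambda$; it is not routine bookkeeping, and your plan omits it.
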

\begin{proof}
We shall prove that $\sigma$ has a subsequence of the form 
$ax\lambda_1 y\lambda_2 z\mu$ or $xa\lambda_1 y\lambda_2 z\mu$ where 
\begin{enumerate}
\item $\lambda=\lambda_1\lambda_2$,
\item $a<x$,
\item $y$ and $z$ are the two largest terms of this sequence,
\item if  $a$ precedes $x$ and $\lambda_1$ is empty, then $x$ and $y$ are the same term.
\end{enumerate}
Such a subsequence is indeed isomorphic to a permutation in $R(\pi)$: 
the subsequence $a\lambda_1\lambda_2 z=a\lambda z$ is order isomorphic 
to $\pi$ and $x,y$ play the roles of the two white points in Figure \ref{basispermextension}.

We shall use induction on the length of $\sigma$. 
The inductive base is provided by the vacuous cases $|\sigma|\leq 3$ so we 
now assume that $|\sigma|>3$ and that the result is true for shorter sequences 
(and any appropriate sequences $a\lambda b\mu$).

Writing $\sigma=\sigma_1 m\sigma_2$, where $m$ is the largest term of 
$\sigma$, we have $a\lambda b\mu\subseq B(\sigma_1)\sigma_2 m$.  
In fact, as $a\lambda b\mu$ does not end with its largest term, we have 
$a\lambda b\mu\subseq B(\sigma_1)\sigma_2$.  
We consider the various ways in which $a\lambda b\mu$ can lie across $B(\sigma_1)\sigma_2$.

Case 1. $a\lambda b\mu_1\subseq B(\sigma_1)$ and $\mu_2\subseq\sigma_2$ for some partition $\mu=\mu_1\mu_2$ with $\mu_1$ non-empty.  
Here the inductive hypothesis applies and yields a subsequence of $\sigma_1$ 
of the form  $ax\lambda_1 y\lambda_2 z\mu_1$ or $xa\lambda_1 y\lambda_2 z\mu_2$ 
to which the subsequence $\mu_2$ of $\sigma_2$ may be appended to obtain a 
subsequence of $\sigma$ of the required form.

Case 2.  $a\lambda b\subseq B(\sigma_1)$ and $\mu\subseq\sigma_2$.  
Here $\sigma_1$ cannot have length 1 and so Lemma \ref{OneLR1} shows 
that $\sigma_1$ has a subsequence $at\lambda$ or $ta\lambda$ for some $t>a$.  
Either $b$ lies to the left of $\lambda$ in $\sigma_1$, in which case we can 
take it to play the roles of both $x$ and $y$, or it lies between two terms of 
$\lambda$ or to the right of $\lambda$ and, in this case, we take it in the 
role of $y$ and take $t$ in the role of $x$.  
We can append to this sequence the subsequence $m\mu$ of $\sigma_2$, with $m$ playing the role of $z$.  
This gives the required subsequence.

Case 3.  $a\lambda_1\subseq B(\sigma_1)$ and $\lambda_2 b\mu\subseq\sigma_2$ 
for some partition $\lambda=\lambda_1\lambda_2$ with $\lambda_1$ non-empty. 
Because $\lambda_1$ is non-empty, $|\sigma_1|$ has length greater than 1 and 
Lemma \ref{OneLR1} applies to show that $\sigma_1$ has a subsequence $ax\lambda_1$ 
or $xa\lambda_1$ for some $x>a$. 
To this sequence we can append the subsequence $m\lambda_2 b\mu$, take $m$ and $b$ 
in the roles of $y$ and $z$ and obtain a sequence of the required type.

Case 4. $a\subseq B(\sigma_1)$ and $\lambda b\mu\subseq\sigma_2$.  
Here $am\lambda b\mu$ is a subsequence of $\sigma$.  
This is a case where the roles of $x$ and $y$ are both played by $m$.

Case 5. $a\lambda b\mu\subseq \sigma_2$.  
Here $ma\lambda b\mu$ is a subsequence of $\sigma$ and $m$ again plays the roles of $x$ and $y$.
\end{proof}

\begin{lemma}\label{TwoLR2}
Let $\pi$ be a permutation with exactly two left to right maxima and not 
ending in its largest element  (so of the type displayed in Figure \ref{basispermextension}). 
Suppose that $\sigma$ contains a subsequence order isomorphic to a permutation in $R(\pi)$.  
Then $B(\sigma)$ contains a sequence order isomorphic to $\pi$.
\end{lemma}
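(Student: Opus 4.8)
The plan is to run the analysis in the proof of Lemma~\ref{TwoLR1} in reverse. Write $\pi=a\lambda b\mu$ with $a,b$ its only two left-to-right maxima and $\mu$ non-empty. A member of $R(\pi)$ is, up to order isomorphism, one of the permutations exhibited in that proof: a sequence of the form $ax\lambda_1 y\lambda_2 z\mu$ or $xa\lambda_1 y\lambda_2 z\mu$ in which $\lambda=\lambda_1\lambda_2$, $a<x$, the symbols $y$ and $z$ are the two largest, the subsequence $a\lambda z\mu$ is order isomorphic to $\pi$ (so $z$ plays the role of $b$), and $x$ and $y$ coincide when $a$ precedes $x$ and $\lambda_1$ is empty. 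So, assuming $\sigma$ contains a subsequence of one of these shapes, I shall produce a copy of $\pi$ inside $B(\sigma)$ whose symbols are (the image of) $a$, the symbols of $\lambda$, a single large symbol acting as $b$, and the symbols of $\mu$. The only tool is Lemma~\ref{Bdefinition2}: $B$ preserves the relative order of the symbols of $\sigma$ that are not left-to-right maxima, while each left-to-right maximum is moved just past the block of non-maxima immediately following it.

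First I would dispose of $a$, $\lambda$ and $\mu$. Every symbol of $\lambda$ and of $\mu$ is a non-maximum of $\sigma$: a symbol of $\lambda_1$ is preceded in the subsequence by $x$, a symbol of $\lambda_2$ by $y$, and a symbol of $\mu$ by $z$, each of which is larger. Hence $B$ keeps these symbols in their subsequence order, namely $\lambda$ then $\mu$. In the shape $xa\cdots$ the symbol $a$ is also a non-maximum (preceded by $x>a$), so the same applies and $B(\sigma)$ displays $a$ before $\lambda$ before $\mu$. In the shape $ax\cdots$, $a$ might be a left-to-right maximum of $\sigma$; but then the symbol $x$, which is larger than $a$, lies between $a$ and the first symbol of $\lambda\mu$ in the subsequence, so $\sigma$ has a left-to-right maximum after $a$ preceding every symbol of $\lambda$ and of $\mu$, which forces the image of $a$ in $B(\sigma)$ to precede all of $\lambda$ and $\mu$ anyway. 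Thus in every case we already have $a$, then $\lambda$, then $\mu$ occurring in $B(\sigma)$ in the pattern of $\pi$; what remains is to interpolate a symbol larger than $a$, $\lambda$ and $\mu$ that sits after $\lambda$ and before $\mu$.

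The natural choice is $z$. If $z$ is not a left-to-right maximum of $\sigma$, then $B$ keeps $z$ in its subsequence order relative to $\lambda$ and $\mu$, and since $z$ exceeds $a$, $\lambda$ and $\mu$, the subsequence $a\lambda z\mu$ of $B(\sigma)$ is the required copy of $\pi$. The hard case — and the one place needing genuine care — is when $z$ is a left-to-right maximum of $\sigma$: then $z$ must exceed $y$ (otherwise $y$ would block it), so $z$ is the strict maximum of the subsequence, and writing $z$ as the $j$-th left-to-right maximum $m_j$ of $\sigma$ we have $j\ge 2$ because $a$ precedes $z$ in $\sigma$. The obstacle is that $B$ pushes $z$ to the right, past the block of non-maxima following it, which may contain symbols of $\mu$ and so wreck the copy. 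I would handle this by a dichotomy. If every symbol of $\mu$ has a left-to-right maximum of $\sigma$ between it and $z$, then in $B(\sigma)$ all of $\mu$ still lies to the right of $z$ and all of $\lambda$ to its left, so $z$ still works. Otherwise I would take $m_{j-1}$ to play the role of $b$, and check, using Lemma~\ref{Bdefinition2}, that: $m_{j-1}\ge y$ (since $y$ lies before $m_j=z$ in $\sigma$ and is therefore dominated by the previous left-to-right maximum $m_{j-1}$), so $m_{j-1}$ exceeds $a$, $\lambda$ and $\mu$; that $m_{j-1}\neq a$; that the symbols of $\mu$, all lying in blocks of $\sigma$ after $m_j$, end up to the right of $m_{j-1}$ in $B(\sigma)$; and that $a$ and the symbols of $\lambda$, all lying in blocks before $m_j$, end up to its left. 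The bulk of the work is precisely this bookkeeping that the various symbols land in the claimed blocks of $B(\sigma)$; the alternative positions of $x$ and of $a$ and the coalescing case are routine variants of the arguments above, so I expect no further obstacle once the placement of $b$ is settled.
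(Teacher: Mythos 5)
Your argument is correct and follows essentially the same route as the paper's proof: reduce everything to the block structure of Lemma~\ref{Bdefinition2}, note that $a$, $\lambda$ and $\mu$ are (or behave like) non-maxima and so keep their relative order, and when the symbol playing $b$ is a left-to-right maximum $m_j$ substitute the preceding maximum $m_{j-1}$, which lands between $\lambda$ and $\mu$ in $B(\sigma)$ and satisfies $m_{j-1}\geq y$. The only difference is cosmetic: your first branch of the dichotomy (keeping $z$ when all of $\mu$ is separated from it by later maxima) is subsumed by the $m_{j-1}$ argument, which the paper applies uniformly.
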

\begin{proof}
We use two principles that follow from Lemma \ref{Bdefinition2} regarding 
the transformation of $\sigma$ into $B(\sigma)$:
\begin{enumerate}
\item every subsequence of $\sigma$ whose terms are not left to right maxima is transformed into the same subsequence, and
\item if $\psi_1 m\psi_2$ is a subsequence of $\sigma$ such that $m$ is the only left to right maxima of $\sigma$ in the 
subsequence, then, in $B(\sigma)$, the immediately preceding left to right maximum of $\sigma$ lies between $\psi_1$ and $\psi_2$.
\end{enumerate}

Let $ax\lambda_1 y \lambda_2 b\mu$ or $xa\lambda_1 y \lambda_2 b\mu$ 
be a subsequence of $\sigma$ that is order isomorphic to a permutation of $R(\pi)$.  
Here $a\lambda_1  \lambda_2 b\mu=a\lambda b\mu$ is order isomorphic to $\pi$.  
Indeed $a,b$ correspond to the black points in either diagram of Figure \ref{basispermextension}, 
$x,y$ correspond to the white points in the right-hand diagram, 
$\lambda$ corresponds to $\alpha$, and $\mu$ corresponds to $\beta$.

By (1) above $\lambda\mu$ will be a subsequence of $B(\sigma)$ since 
no term in this subsequence of $\sigma$ is a left to right maximum 
(not even a left to right maximum of $ax\lambda_1 y \lambda_2 b\mu$ or $xa\lambda_1 y \lambda_2 b\mu$).

Also $a$ will precede $\lambda\mu$ in $B(\sigma)$.  
This follows from (1) if $a$ is not a left to right maximum of $\sigma$. 
However, if $a$ is a left to right maximum, then $a$ precedes $x$ in $\sigma$; 
but then $x$ is either a left to right maximum also, or there is another left 
to right maximum between $a$ and $x$ and we can appeal to (2) above.

Finally we show that, in $B(\sigma)$, there is a term between $\lambda$ and 
$\mu$ that, even if it is not $b$ itself, plays the role of $b$ in that it 
exceeds every term of $\mu$.  
If $b$ is not a left to right maximum of $\sigma$ then, by (1), it will 
itself be positioned between $\lambda$ and $\mu$ in $B(\sigma)$.  
So we suppose that $b$ is a left to right maximum of $\sigma$ and let 
$b^*$ be the immediately preceding left to right maximum.  
Then, by (2), $b^*$ will lie between $\lambda$ and $\mu$ in $B(\sigma)$ 
so it is now sufficient to show that $b^*\geq y$ (since $y>\mu$).  
Indeed  $b^*<y$ is impossible;  for either $y$ would precede $b^*$ and then 
$b^*$ would not be a left to right maximum, or $b^*$ would precede $y$ and 
then $b^*$ would not be the left to right maxima that immediately preceded $b$.

Hence $B(\sigma)$ contains $\pi$.
\end{proof}

Now Lemmas \ref{basicmethod}, \ref{TwoLR1} and \ref{TwoLR2} prove

\begin{proposition}\label{TwoLRmain}
Let $\pi$ be a permutation of length $n$, with exactly two left to right maxima but not ending in its maximal symbol.  
Then $\unB(\Av(\pi))$ is a pattern class whose basis is the set of minimal permutations in $R(\pi)$.
\end{proposition}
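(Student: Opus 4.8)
The plan is to apply Lemma~\ref{basicmethod} with the set $R = R(\pi)$ defined by Figure~\ref{basispermextension}, so that the two verification conditions become exactly the statements of Lemmas~\ref{TwoLR1} and~\ref{TwoLR2}. Concretely, since $\pi$ has exactly two left to right maxima and does not end in its maximal symbol, we may write $\pi = a\lambda b\mu$ in the normalised form where $a,b$ are the two left to right maxima and $\mu$ is non-empty; this is precisely the hypothesis shape required by both lemmas, so the machinery assembled above applies without further preparation.

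First I would check condition~(1) of Lemma~\ref{basicmethod}: suppose $\pi \inv B(\sigma)$, i.e.\ $B(\sigma)$ contains a subsequence order isomorphic to $\pi$. Relabelling that subsequence as $a\lambda b\mu$ (now as actual values inside $B(\sigma)$, with $a,b$ its only two left to right maxima and $\mu$ non-empty), Lemma~\ref{TwoLR1} yields that $\sigma$ contains a sequence order isomorphic to some member of $R(\pi)$, which is exactly the conclusion $\rho \inv \sigma$ for some $\rho \in R(\pi)$. Then I would check condition~(2): if $\rho \inv \sigma$ for some $\rho \in R(\pi)$, then Lemma~\ref{TwoLR2} (whose hypotheses are met because $\pi$ has exactly two left to right maxima and does not end in its largest element) gives that $B(\sigma)$ contains a sequence order isomorphic to $\pi$, i.e.\ $\pi \inv B(\sigma)$. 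With both conditions verified, Lemma~\ref{basicmethod} immediately gives that $\unB(\Av(\pi))$ is a pattern class, and moreover that a minimal such $R$ is its basis.

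The only remaining point is the identification of the basis as \emph{the set of minimal permutations in $R(\pi)$} rather than all of $R(\pi)$. Here I would note that replacing $R(\pi)$ by its subset $R_{\min}(\pi)$ of minimal elements (in the $\inv$ order) preserves both conditions of Lemma~\ref{basicmethod}: condition~(2) is only strengthened when we need fewer $\rho$'s to be available, and in fact a containment $\rho \inv \sigma$ with $\rho \in R(\pi)$ implies $\rho' \inv \sigma$ for the minimal $\rho' \inv \rho$; condition~(1) is untouched since it is the implication whose conclusion we are weakening. So $R_{\min}(\pi)$ also satisfies the hypotheses, and being an antichain it is minimal with respect to set inclusion among sets satisfying them — this is the standard fact that a downset has a unique minimal forbidden set, namely the antichain of minimal elements of its complement. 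By the ``furthermore'' clause of Lemma~\ref{basicmethod}, $R_{\min}(\pi)$ is the basis.

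I do not expect a genuine obstacle: the substantive work is entirely contained in Lemmas~\ref{TwoLR1} and~\ref{TwoLR2}, and this proposition is the assembly step. The one place requiring a little care is making sure the normalisation of $\pi$ used to invoke each lemma is legitimate — in particular that ``two left to right maxima, not ending in its largest symbol'' is literally the form ``$a\lambda b\mu$ with $a,b$ the only two left to right maxima and $\mu$ non-empty'' demanded in Lemma~\ref{TwoLR1}, and the form described via Figure~\ref{basispermextension} demanded in Lemma~\ref{TwoLR2} — but these coincide by inspection. The minimality argument for the basis is routine order theory and can be stated in a sentence or two.
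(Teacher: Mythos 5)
Your proposal is correct and follows exactly the route the paper takes: the paper's proof of this proposition is literally the one-line assembly ``Lemmas \ref{basicmethod}, \ref{TwoLR1} and \ref{TwoLR2} prove,'' with conditions (1) and (2) of Lemma \ref{basicmethod} supplied by Lemmas \ref{TwoLR1} and \ref{TwoLR2} respectively, just as you describe. Your extra paragraph justifying the passage from $R(\pi)$ to its set of minimal elements is a sound (and slightly more explicit) treatment of the ``minimal $R$'' clause that the paper leaves implicit.
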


\begin{corollary}\label{ThreeLR1}
If $\pi=a\alpha b\beta c$ has exactly three left to right maxima, namely $a$, $b$ and $c$, and $\beta$ is non-empty then  
$$\unB(\Av(\pi))=\unB(\Av(a\alpha b\beta)).$$
\end{corollary}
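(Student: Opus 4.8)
The plan is to exploit the same phenomenon used in Corollary \ref{aalphab}: that $B(\sigma)$ always ends with its maximal symbol, so a prospective pattern that does \emph{not} end with its maximum is ``for free'' extendable by its own top element. Here $\pi = a\alpha b\beta c$ has three left to right maxima and $\beta$ is non-empty, so the truncated permutation $\pi' = a\alpha b\beta$ (obtained by deleting the final symbol $c$) has exactly two left to right maxima, namely $a$ and $b$, and it does \emph{not} end in its maximal symbol, since $\beta$ is non-empty and every symbol of $\beta$ is strictly below $b$. Thus $\pi'$ is of the type covered by Proposition \ref{TwoLRmain}, and it suffices to prove $\unB(\Av(\pi)) = \unB(\Av(\pi'))$.

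First I would establish the key implication
\[
\pi' \inv B(\sigma) \implies \pi \inv B(\sigma)
\]
for every permutation $\sigma$. The point is that if $\pi' = a\alpha b\beta$ embeds into $B(\sigma)$, then $B(\sigma)$ ends with its maximal term $m = \max(B(\sigma))$, and this term $m$ occurs strictly to the right of the image of $\pi'$ (or is the last symbol of that image, but then since $\pi'$ does not end in its maximum we may push the embedding so that $m$ lies to the right of it). Since $m$ is larger than every term of $B(\sigma)$, in particular larger than the images of $a$, $b$, and every term of $\alpha\beta$, appending $m$ to the image of $a\alpha b\beta$ yields an embedding of $a\alpha b\beta c = \pi$ into $B(\sigma)$ — here $c$, the largest symbol of $\pi$, is matched to $m$. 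Note the embedding of $\pi'$ has image avoiding the last position only when $\pi'$ genuinely fails to end at its max, which is exactly guaranteed by $\beta$ non-empty; this is the one place where that hypothesis is used.

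With that implication in hand, the conclusion is a short chain of equivalences exactly mirroring the proof of Corollary \ref{aalphab}:
\[
\sigma \in \unB(\Av(\pi)) \iff B(\sigma) \in \Av(\pi) \iff B(\sigma) \in \Av(\pi') \iff \sigma \in \unB(\Av(\pi')),
\]
where the middle step uses the implication above together with the trivial converse $\pi \inv B(\sigma) \implies \pi' \inv B(\sigma)$ (since $\pi' \inv \pi$). Hence $\unB(\Av(\pi)) = \unB(\Av(a\alpha b\beta))$, as claimed. I do not anticipate a serious obstacle: the only subtlety is being careful that the embedding of $\pi'$ into $B(\sigma)$ can always be chosen so that the final letter of $B(\sigma)$ lies strictly to its right, which follows because $\pi'$ does not end at its maximum. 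Everything else is the bookkeeping of the equivalences, and the final basis statement then follows immediately from Proposition \ref{TwoLRmain} applied to $\pi'$.
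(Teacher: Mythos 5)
Your proof is correct and follows essentially the same route as the paper, which simply observes that $a\alpha b\beta$ does not end with its maximum element (because $\beta$ is non-empty) while $B(\sigma)$ always does, and then reuses the chain of equivalences from Corollary~\ref{aalphab}. The only cosmetic point: no ``pushing'' of the embedding is needed, since the final (maximal) symbol of $B(\sigma)$ can never lie in the image of $a\alpha b\beta$ at all --- it would have to be the image of the last symbol of $\beta$, which is not the maximum of $a\alpha b\beta$.
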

\begin{proof}
Note that $a\alpha b \beta$ does not end with its maximum element and therefore we may use the same proof as in Corollary \ref{aalphab}.
\end{proof}

There remains just one case:  when $\pi=a\alpha b\beta n$ has exactly three left to right maxima as in Corollary \ref{ThreeLR1} but when $\beta$ is empty. 
Here $a=n-2$ and $b=n-1$ (written as $\nminustwo$ and $\nminusone$ for typographical brevity).  
This case is treated in the following proposition which is proved by the same approach followed in Propositions \ref{OneLRmain} and \ref{TwoLRmain}.

\begin{proposition}
Let $\pi$ be a permutation of the form $\pi=\nminustwo\ \alpha\ \nminusone\ n$.  Then 
\[\unB(\Av(\pi))
=
\Av(\nminustwo\ \nminusone\ \alpha\ n, \nminusone\ \nminustwo\ \alpha\ n, \nminustwo\ n\ \alpha\ \nminusone, n\ \nminustwo\ \alpha\ \nminusone).\]
\end{proposition}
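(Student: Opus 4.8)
The plan is to apply Lemma~\ref{basicmethod} with $R$ equal to the set of the four permutations on the right-hand side,
\[R=\{\nminustwo\ \nminusone\ \alpha\ n,\ \nminusone\ \nminustwo\ \alpha\ n,\ \nminustwo\ n\ \alpha\ \nminusone,\ n\ \nminustwo\ \alpha\ \nminusone\}.\]
These four permutations all have the same length as $\pi$ and are pairwise distinct, so $R$ is an antichain in the subpermutation order; hence, once the two conditions of Lemma~\ref{basicmethod} are checked, it follows both that $\unB(\Av(\pi))=\Av(R)$ and that $R$ is its basis. Two preliminary remarks will be used throughout. First, writing $\pi^{-}=\nminustwo\ \alpha\ \nminusone$ (a permutation with exactly two left to right maxima which ends in its largest term) and $\sigma=\sigma_1 m\sigma_2$ with $m=\max(\sigma)$, the identity $B(\sigma)=B(\sigma_1)\sigma_2 m$ together with the fact that $B(\sigma)$ always ends in its maximal symbol gives the equivalence $\pi\inv B(\sigma)\iff\pi^{-}\inv B(\sigma_1)\sigma_2$ (in $\sigma=\sigma_1m\sigma_2$ any occurrence of $\pi$ in $B(\sigma)$ may be assumed to use $m$ as its last symbol). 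Second, Corollary~\ref{aalphab}, applied to $\pi^{-}$, says that $\pi^{-}\inv B(\tau)$ if and only if $\tau$ contains $\nminustwo\ \nminusone\ \alpha$ or $\nminusone\ \nminustwo\ \alpha$.

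For condition~(1), assume $\pi\inv B(\sigma)$, so by the first remark $B(\sigma_1)\sigma_2$ contains a subsequence $p\gamma q$ with $\gamma<p<q$ and $\gamma$ order isomorphic to $\alpha$. I would split into (exhaustive) cases according to how $p\gamma q$ straddles the concatenation $B(\sigma_1)\mid\sigma_2$. If $p\gamma q\subseq\sigma_2$, then $m\,p\gamma q\subseq\sigma$ is order isomorphic to $n\ \nminustwo\ \alpha\ \nminusone$. If $p$ lies in $B(\sigma_1)$ but no point of $\gamma q$ does, then $p\,m\,\gamma q\subseq\sigma$ is order isomorphic to $\nminustwo\ n\ \alpha\ \nminusone$. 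If $p$ together with a non-empty prefix $\gamma_1$ of $\gamma$ lies in $B(\sigma_1)$ (so $\gamma=\gamma_1\gamma_2$ with $\gamma_2 q$ in $\sigma_2$), then Lemma~\ref{OneLR1} applied to $p\gamma_1\subseq B(\sigma_1)$ yields $t>p$ with $pt\gamma_1$ or $tp\gamma_1$ in $\sigma_1$; re-attaching $\gamma_2 q$ from $\sigma_2$ exhibits in $\sigma$ one of $pt\gamma q$, $tp\gamma q$, and according to whether $t<q$ or $t>q$ each of these is order isomorphic to one of the four members of $R$. If all of $p\gamma q$ lies in $B(\sigma_1)$, then $\pi^{-}\inv B(\sigma_1)$, so by the second remark $\sigma_1$ contains $\nminustwo\ \nminusone\ \alpha$ or $\nminusone\ \nminustwo\ \alpha$, and appending $m$ gives in $\sigma$ a copy of $\nminustwo\ \nminusone\ \alpha\ n$ or $\nminusone\ \nminustwo\ \alpha\ n$. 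In every case $\sigma$ contains a member of $R$.

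For condition~(2), suppose $\sigma$ contains a subsequence order isomorphic to a member of $R$; in each case let $u$ be the symbol playing the role of $\nminustwo$, $\gamma$ the block playing the role of $\alpha$, and $v,w$ the symbols playing the roles of $\nminusone$ and $n$, so $\gamma<u<v<w$; put $m=\max(\sigma)$, the last symbol of $B(\sigma)$. The argument mirrors Lemma~\ref{TwoLR2}, using that non-left-to-right-maxima of $\sigma$ keep their relative order in $B(\sigma)$ and that a left to right maximum $u$ of $\sigma$ which is followed in $\sigma$ by a larger symbol is carried by $B$ to a position preceding every symbol occurring after that larger one. For $\nminusone\ \nminustwo\ \alpha\ n$ and $n\ \nminustwo\ \alpha\ \nminusone$ the symbol $u$ is preceded in the subsequence by a larger symbol, hence is not a left to right maximum of $\sigma$; for $\nminustwo\ \nminusone\ \alpha\ n$ and $\nminustwo\ n\ \alpha\ \nminusone$ it is immediately followed by a larger one. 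In the two cases where the largest symbol $w$ occurs last in the member of $R$ (namely $\nminustwo\ \nminusone\ \alpha\ n$ and $\nminusone\ \nminustwo\ \alpha\ n$) I would take the target subsequence of $B(\sigma)$ to be $u\,\gamma\,w\,m$, which is order isomorphic to $\pi$ and occurs in $B(\sigma)$ whenever $w\neq m$; in the other two cases I would take $u\,\gamma\,v\,m$, which always works since $v<w\le m$ forces $v\neq m$. The residual case $w=m$ in the first pair is handled exactly as in condition~(1): then $u,v,\gamma$ all lie in $\sigma_1$ and form a copy of $\nminustwo\ \nminusone\ \alpha$ or $\nminusone\ \nminustwo\ \alpha$, so Corollary~\ref{aalphab} gives $\pi^{-}\inv B(\sigma_1)$ and appending $m$ gives $\pi\inv B(\sigma)$. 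Thus $B(\sigma)$ always contains $\pi$, and Lemma~\ref{basicmethod} finishes the proof.

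The main obstacle is the bookkeeping in condition~(1): one must check that the four straddling possibilities for $p\gamma q$ are genuinely exhaustive and that, in the case fed to Lemma~\ref{OneLR1}, the two outputs $pt\gamma_1$, $tp\gamma_1$ combined with the two sub-cases $t<q$, $t>q$ produce precisely the four members of $R$ and nothing outside it. A secondary point requiring care is the degenerate case $w=\max(\sigma)$ in condition~(2), where one cannot simply relocate points of $B(\sigma)$ and must instead fall back on Corollary~\ref{aalphab}; this is also why the argument cannot be run purely inside $B(\sigma)$ but must pass through the decomposition $B(\sigma)=B(\sigma_1)\sigma_2 m$.
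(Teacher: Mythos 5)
Your proof is correct and follows exactly the route the paper prescribes for this proposition (which it leaves unproved, remarking only that it goes ``by the same approach followed in Propositions \ref{OneLRmain} and \ref{TwoLRmain}''): you verify the two conditions of Lemma \ref{basicmethod} by decomposing $\sigma=\sigma_1 m\sigma_2$, invoking Lemma \ref{OneLR1} and Corollary \ref{aalphab} for the forward direction and the order-preservation principles of Lemma \ref{Bdefinition2} for the converse. Your case analysis in condition (1) is exhaustive, the four sub-cases of Lemma \ref{OneLR1}'s output do land precisely on the four listed patterns, and the $w=\max(\sigma)$ fallback in condition (2) is handled correctly, so the details you supply are a valid completion of the paper's omitted argument.
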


\section{Conclusion and open problems}
This paper has characterised the principal pattern classes $\Av(\pi)$ for which $\unB(\Av(\pi))$ is a pattern class.  
It has given a criterion in terms of left to right maxima of $\pi$ and we call such permutations ``good''.  
For non-principal pattern classes far less is known although the following easy result holds:

\begin{proposition}
Let $\Pi$ be any set of good permutations then 
\[\unB(\Av(\Pi))=\bigcap_{\pi\in\Pi}\unB(\Av(\pi)).\]
\end{proposition}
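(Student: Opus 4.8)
The plan is to recognise the asserted identity as a purely formal consequence of two observations: that $\Av(\Pi)$ is, by definition, an intersection of principal classes, and that $\unB$ here denotes a set-theoretic preimage, which commutes with arbitrary intersections. The hypothesis that each $\pi\in\Pi$ is good is not in fact needed for the equality itself; it enters only to guarantee that the common value of the two sides is actually a pattern class.

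First I would unwind the definitions. Straight from the definition of avoidance, $\Av(\Pi)=\{\beta:\mu\ninv\beta\text{ for all }\mu\in\Pi\}=\bigcap_{\pi\in\Pi}\Av(\pi)$. Then, reading $\unB(X)$ as $\{\sigma:B(\sigma)\in X\}$, I would observe that $\sigma\in\unB\bigl(\bigcap_{\pi\in\Pi}\Av(\pi)\bigr)$ if and only if $B(\sigma)\in\Av(\pi)$ for every $\pi\in\Pi$, if and only if $\sigma\in\unB(\Av(\pi))$ for every $\pi\in\Pi$, if and only if $\sigma\in\bigcap_{\pi\in\Pi}\unB(\Av(\pi))$. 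Chaining the two displays yields the claimed equality, with no appeal to goodness or to finiteness of $\Pi$.

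To draw out the content implicit in the statement I would then add a closing remark. Since every $\pi\in\Pi$ is good, the propositions of the previous section show that each $\unB(\Av(\pi))$ is a pattern class, that is, a downset in the order $\inv$; an arbitrary intersection of downsets is again a downset, so $\bigcap_{\pi\in\Pi}\unB(\Av(\pi))$, and hence $\unB(\Av(\Pi))$, is a pattern class. If one also wants an explicit basis, it is the set of minimal elements of $\bigcup_{\pi\in\Pi}\,(\text{basis of }\unB(\Av(\pi)))$, which could be noted in passing.

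I do not expect any genuine obstacle. The only subtlety worth flagging in the write-up is that $B$ is not injective, so $\unB$ must be understood as a full preimage rather than as a two-sided inverse map; but preimages commute with intersections in complete generality, so nothing is lost, and the whole argument is a few lines of definition-chasing.
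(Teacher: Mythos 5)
Your proof is correct and is essentially the paper's own argument: both are the same chain of equivalences $\sigma\in\unB(\Av(\Pi))\iff B(\sigma)\in\Av(\pi)$ for all $\pi\in\Pi\iff\sigma\in\bigcap_{\pi\in\Pi}\unB(\Av(\pi))$, i.e.\ preimages commute with intersections. Your closing observations (goodness is irrelevant to the equality itself and serves only to make the intersection a pattern class, with basis the minimal elements of the union of the individual bases) match the remarks the paper makes immediately after its proof.
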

\begin{proof}
\begin{eqnarray*}
\sigma\in \unB(\Av(\Pi))&\iff&B(\sigma)\in \Av(\Pi)\\
&\iff&B(\sigma)\in\Av(\pi)\mbox{ for all }\pi\in\Pi\\
&\iff&\sigma\in \unB(\Av(\pi))\mbox{ for all }\pi\in\Pi.
\end{eqnarray*}
\end{proof}
Of course this result proves not only that, when $\Pi$ contains only good permutations, 
$\unB(\Pi)$ is a pattern class but it also allows its basis to be described (as the set 
of minimal permutations in the union of bases of the pattern classes $\unB(\Av(\pi))$).

\begin{problem}\label{NecSuff}
Find necessary and sufficient conditions on a set of permutations $\Pi$ to guarantee that $\unB(\Av(\Pi))$ is a pattern class.
\end{problem}

Pattern class research is often concerned with the enumeration question: how many permutations 
of length $n$ does a particular pattern class contain. For principal pattern classes $\Av(\pi)$ 
no enumerations are known when $|\pi|>4$ and so it would be unrealistic to hope that many  pattern 
classes of the form $\unB(\Av(\pi))$ could be enumerated. However a cruder question can be asked.  
Every pattern class $\P$ has an upper growth rate $g(\P)$ defined as 
$\limsup_{n\rightarrow \infty}\sqrt[n]a_n$ ($a_n$ being the number of permutations in $\P$ of length $n$).

\begin{problem}
Suppose that $\P$ and $\unB(\P)$ are both pattern classes. How is $g(\unB(\P))$ related to $g(\P)$?
\end{problem}

We end with two observations about composing sorting operators.  For the operator $SB$ mentioned in Section 1 we have

\begin{proposition}
The set of permutations sortable by the operator $SB$ is the pattern class $\Av(3241,2341, 4231, 2431)$.
\end{proposition}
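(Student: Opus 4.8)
The plan is to reduce the claim to a statement about $\unB$ and then apply Proposition~\ref{TwoLRmain}. As recalled in Section~1, West proved that $S$ sorts a permutation $\tau$ exactly when $\tau\in\Av(231)$; consequently $SB$ sorts $\sigma$ if and only if $B(\sigma)\in\Av(231)$, that is, if and only if $\sigma\in\unB(\Av(231))$. So the proposition is equivalent to the identity
\[
\unB(\Av(231))=\Av(3241,2341,4231,2431),
\]
and this identity determines the set $\unB(\Av(231))=\unB\unS(\Av(21))$ singled out in the introduction.

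The first real step is to note that $231$ has exactly two left to right maxima, namely $2$ and $3$, and that it does not end in its largest symbol, so Proposition~\ref{TwoLRmain} applies: $\unB(\Av(231))$ is a pattern class whose basis is the set of minimal permutations of $R(231)$. Everything then comes down to identifying those minimal permutations, and this is the step that requires care. Specialising Figure~\ref{basispermextension} to $\pi=231$, the block $\alpha$ is empty and $\beta$ is the single symbol $1$. Because $\alpha$ is empty, the two ``white points'' of $R(\pi)$ are both forced to lie immediately next to the symbol $2$, so any genuine two-point member of $R(231)$ has length $5$, and a short direct check shows that each such permutation contains one of the four length-$4$ permutations obtained when the two white points coalesce. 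Those four coalesced permutations are precisely the one-point extensions of $231$ in which a single point is inserted immediately before, or immediately after, the symbol $2$ and is given either the largest or the second largest value; explicitly they are $3241$, $2341$, $4231$ and $2431$. Since these four permutations form an antichain in the subpermutation order, Proposition~\ref{TwoLRmain} gives that they are exactly the basis of $\unB(\Av(231))$, which is what we wanted.

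The one delicate point is thus the extraction of the minimal members of $R(231)$ from Figure~\ref{basispermextension}; the translation through West's theorem, and the verification that the length-$5$ members of $R(231)$ are not minimal, are routine. As an independent check one can instead invoke Lemma~\ref{basicmethod} directly with $\pi=231$ and $R=\{3241,2341,4231,2431\}$: condition~(2) is obtained by tracing each of these four patterns through $B$, in the style of the proof of Lemma~\ref{TwoLR2} (note that $B$ maps each of them to $2314$, which contains $231$), and condition~(1) by a short induction on $\sigma=\sigma_1 m\sigma_2$ with $m=\max\sigma$, modelled on the proof of Lemma~\ref{TwoLR1}. One also checks easily that no permutation of length at most $3$ has $231$ in its $B$-image and that the only permutations of length $4$ whose $B$-image contains $231$ are the four listed, which is consistent with the claimed basis.
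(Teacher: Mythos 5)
Your proof is correct and follows essentially the same route as the paper: reduce via West's theorem to $\unB\unS(\Av(21))=\unB(\Av(231))$ and then invoke Proposition~\ref{TwoLRmain}. The only difference is that you explicitly work out the minimal elements of $R(231)$ (correctly, as $3241,2341,4231,2431$), a computation the paper leaves implicit.
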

\begin{proof}
$(SB)^{-1}(\Av(21))=\unB\unS(\Av(21))=\unB(\Av(231))$ and now the result follows from Proposition \ref{TwoLRmain}.
\end{proof}

\begin{proposition}
The set of permutations sortable with $k$ passes of bubble sort, namely $(B^k)^{-1}(\Av(21))$, 
is a pattern class whose basis is the set of $(k+1)!$ permutations of length $k+2$ whose final term is $1$.
\end{proposition}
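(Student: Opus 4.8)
The plan is to track, for a permutation $\sigma=\sigma_1\cdots\sigma_n$, the statistic $C(\sigma)=\max_{1\le j\le n} c(\sigma,j)$, where $c(\sigma,j)=\#\{\,i<j:\sigma_i>\sigma_j\,\}$ is the number of entries larger than $\sigma_j$ lying to its left. The crux will be the following lemma: for every permutation $\sigma$,
\[
C\bigl(B(\sigma)\bigr)=\max\bigl(0,\,C(\sigma)-1\bigr);
\]
that is, one pass of bubble sort decreases $C$ by exactly one until it reaches $0$. Granting this, a trivial induction gives $C(B^k(\sigma))=\max(0,C(\sigma)-k)$, and since $C(\tau)=0$ precisely when $\tau$ is increasing (i.e. $\tau=\Id$ on its ground set), one concludes that $B^k$ sorts $\sigma$ if and only if $C(\sigma)\le k$; the case $k=1$ recovers the Proposition $\unB(\Av(21))=\Av(231,321)$ of Section~1.

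First I would prove the lemma using Lemma~\ref{Bdefinition2}. Write $\sigma=n_1\lambda_1\cdots n_r\lambda_r$ with $n_1<\cdots<n_r$ the left-to-right maxima, so that $B(\sigma)=\lambda_1 n_1\cdots\lambda_r n_r$. If $v=n_i$ is a left-to-right maximum, then every entry preceding it in $\sigma$ is smaller than $n_i$, and the same holds in $B(\sigma)$ (each entry of a block $\lambda_j$ is bounded above by $n_j$), so $c(\sigma,v)=c(B(\sigma),v)=0$. If instead $v$ lies inside a block $\lambda_i$, then the set of entries preceding $v$ is the same in $\sigma$ and in $B(\sigma)$ except that $n_i$ precedes $v$ in $\sigma$ but follows it in $B(\sigma)$; as $v<n_i$ this yields $c(B(\sigma),v)=c(\sigma,v)-1$. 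Taking the maximum over $v$ (which, when $C(\sigma)\ge 1$, is attained at a non-maximum) gives the displayed identity. The one point needing genuine care is this last bookkeeping step — checking that exactly the single entry $n_i$ switches sides relative to $v$ — while the rest is formal; this is the main obstacle I anticipate.

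It then remains to identify $\{\sigma:C(\sigma)\le k\}$ with the asserted pattern class. Observe that $C(\sigma)\ge k+1$ means some entry $\sigma_j$ is preceded by at least $k+1$ larger entries; choosing $k+1$ of them together with $\sigma_j$ exhibits in $\sigma$ an occurrence of a permutation of length $k+2$ whose last symbol is its smallest, i.e. a permutation of length $k+2$ ending in $1$, and conversely any occurrence of such a pattern forces $C(\sigma)\ge k+1$. Hence $(B^k)^{-1}(\Av(21))=\Av(R_k)$, where $R_k$ is the set of all permutations of length $k+2$ with final term $1$. Finally $R_k$ has $(k+1)!$ members (its elements are determined by an arbitrary arrangement of $\{2,\dots,k+2\}$ in the first $k+1$ positions) and it is an antichain under $\inv$, being a set of distinct permutations of equal length; therefore $R_k$ is the basis of $\Av(R_k)$, which is exactly the claim. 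Alternatively one could route the final translation through Lemma~\ref{basicmethod}, but the direct argument above is shorter.
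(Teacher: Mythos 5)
Your proposal is correct and takes essentially the same route as the paper: the paper's one-line proof rests on the equivalence $\sigma\in\Av(\Gamma_k)\iff B(\sigma)\in\Av(\Gamma_{k-1})$ (where $\Gamma_k$ is the set of length-$(k+2)$ permutations ending in $1$), dismissed there as ``easily seen'', and your identity $C(B(\sigma))=\max(0,\,C(\sigma)-1)$ is exactly a quantitative restatement of that equivalence, proved in full from Lemma~\ref{Bdefinition2}. The only difference is that you supply the bookkeeping (which entries change sides relative to a given entry under one pass of $B$) that the paper omits.
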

\begin{proof}
Let $\Gamma_k$ denote the set of all permutations of length $k+2$ that end with the term $1$.  
Then it is easily seen that \[\sigma\in\Av(\Gamma_k)\iff B(\sigma)\in \Av(\Gamma_{k-1})\]
and hence that \[\sigma\in\Av(\Gamma_k)\iff B^k(\sigma)\in\Av(\Gamma_0)=\Av(21).\]
\end{proof}

The proof of this result has a slight bearing on Problem \ref{NecSuff}.  
The set $\Gamma_2$ contains $2341$ which is not a good permutation.  
Nevertheless $\unB(\Av(\Gamma_2))=\Av(\Gamma_3)$ is a pattern class.  
Hence the necessary and sufficient condition asked for in Problem \ref{NecSuff} is 
more subtle than the condition that $\Pi$ consists entirely of good permutations.

Finally, we note that the pattern classes featuring in these propositions have been enumerated.  
Our calculations show that the number of permutations of length $n$ in $\Av(3241,2341, 4231, 2431)$ 
is $\binom{2n-2}{n-1}$ and it is well-known that there are $k^{n-k}k!$ permutations of length $n$ in $\Av(\Gamma_{k-1})$.


\begin{thebibliography}{99}
\bibitem{ACombinatorialProof}
S. Dulucq, S. Gire, O. Guibert: A combinatorial proof of J. West's conjecture, Discrete Mathematics 187 (1998), 71--96.
\bibitem{Multi-statisticEnumeration}
M. Bousquet-M\'{e}lou: Multi-statistic enumeration of two-stack sortable permutations, Electronic Journal of  Combinatorics 5 (1998), Paper R21.
\bibitem{SortingTwice}
J. West: Sorting twice through a stack, Theoretical Computer Science 117 (1993), 303--313.
\bibitem{AProofOfJulian}
D. Zeilberger: A proof of Julian West's conjecture that the number of two-stack-sortable permutations of length $n$ is $(3n)!/((n+1)!(2n+1)!)$, Discrete Mathematics 102 (1992), 85--93.
\end{thebibliography}
\end{document}